\begin{document}


\newtheorem{Theorem}{Theorem}[section]
\newtheorem{Lemma}[Theorem]{Lemma}
\newtheorem{Characterization}[Theorem]{Characterization}
\newtheorem{Proposition}[Theorem]{Proposition}
\newtheorem{Property}[Theorem]{Property}
\newtheorem{Problem}[Theorem]{Problem}
\newtheorem{Example}[Theorem]{Example}
\newtheorem{Remark}[Theorem]{Remark}
\newtheorem{Corollary}[Theorem]{Corollary}
\newtheorem{Definition}[Theorem]{Definition}

%
\newcommand{\bC}{{\mathbb{C}}}
\newcommand{\bD}{{\mathbb{D}}}
\newcommand{\bN}{{\mathbb{N}}}
\newcommand{\bQ}{{\mathbb{Q}}}
\newcommand{\bR}{{\mathbb{R}}}
\newcommand{\bT}{{\mathbb{T}}}
\newcommand{\bX}{{\mathbb{X}}}
\newcommand{\bZ}{{\mathbb{Z}}}
\newcommand{\bH}{{\mathbb{H}}}
%
  \newcommand{\A}{{\mathcal{A}}}
  \newcommand{\B}{{\mathcal{B}}}
  \newcommand{\C}{{\mathcal{C}}}
  \newcommand{\D}{{\mathcal{D}}}
  \newcommand{\E}{{\mathcal{E}}}
  \newcommand{\F}{{\mathcal{F}}}
  \newcommand{\G}{{\mathcal{G}}}
\renewcommand{\H}{{\mathcal{H}}}
  \newcommand{\I}{{\mathcal{I}}}
  \newcommand{\J}{{\mathcal{J}}}
  \newcommand{\K}{{\mathcal{K}}}
\renewcommand{\L}{{\mathcal{L}}}
  \newcommand{\M}{{\mathcal{M}}}
  \newcommand{\N}{{\mathcal{N}}}
\renewcommand{\O}{{\mathcal{O}}}
\renewcommand{\P}{{\mathcal{P}}}
  \newcommand{\Q}{{\mathcal{Q}}}
  \newcommand{\R}{{\mathcal{R}}}
\renewcommand{\S}{{\mathcal{S}}}
  \newcommand{\T}{{\mathcal{T}}}
  \newcommand{\U}{{\mathcal{U}}}
  \newcommand{\V}{{\mathcal{V}}}
  \newcommand{\W}{{\mathcal{W}}}
  \newcommand{\X}{{\mathcal{X}}}
  \newcommand{\Y}{{\mathcal{Y}}}
  \newcommand{\Z}{{\mathcal{Z}}}
%
\newcommand{\fA}{{\mathfrak{A}}}
\newcommand{\fB}{{\mathfrak{B}}}
\newcommand{\fC}{{\mathfrak{C}}}
\newcommand{\fD}{{\mathfrak{D}}}
\newcommand{\fE}{{\mathfrak{E}}}
\newcommand{\fF}{{\mathfrak{F}}}
\newcommand{\fG}{{\mathfrak{G}}}
\newcommand{\fH}{{\mathfrak{H}}}
\newcommand{\fI}{{\mathfrak{I}}}
\newcommand{\fJ}{{\mathfrak{J}}}
\newcommand{\fK}{{\mathfrak{K}}}
\newcommand{\fL}{{\mathfrak{L}}}
\newcommand{\fM}{{\mathfrak{M}}}
\newcommand{\fN}{{\mathfrak{N}}}
\newcommand{\fO}{{\mathfrak{O}}}
\newcommand{\fP}{{\mathfrak{P}}}
\newcommand{\fQ}{{\mathfrak{Q}}}
\newcommand{\fR}{{\mathfrak{R}}}
\newcommand{\fS}{{\mathfrak{S}}}
\newcommand{\fT}{{\mathfrak{T}}}
\newcommand{\fU}{{\mathfrak{U}}}
\newcommand{\fV}{{\mathfrak{V}}}
\newcommand{\fW}{{\mathfrak{W}}}
\newcommand{\fX}{{\mathfrak{X}}}
\newcommand{\fY}{{\mathfrak{Y}}}
\newcommand{\fZ}{{\mathfrak{Z}}}
\newcommand{\ul}{\underline  }
\newcommand{\Aut}{\operatorname{Aut}}
\newcommand{\sgn}{\operatorname{sgn}}
\newcommand{\rank}{\operatorname{rank}}
\newcommand{\adj}{\operatorname{adj}}
\newcommand{\ran}{\operatorname{ran}}


 \title[Almost periodic rigidity of crystallographic bar-joint frameworks]{The almost periodic rigidity of crystallographic bar-joint frameworks
}

\author[G. Badri, D. Kitson and S. C. Power]{G. Badri, D. Kitson and S. C. Power}
\thanks{DK and SCP supported by EPSRC grant  EP/J008648/1.}


\address{Dept.\ Math.\ Stats.\\ Lancaster University\\
Lancaster LA1 4YF \\U.K. }

\email{g.badri@lancaster.ac.uk\\ d.kitson@lancaster.ac.uk \\  s.power@lancaster.ac.uk}
\thanks{2010 {\it  Mathematics Subject Classification.}
52C25, 43A60  \\
Key words and phrases: Crystal  framework, 
infinitesimal rigidity, almost periodic functions.}

\begin{abstract} A crystallographic bar-joint framework $\C$ in $\bR^d$ is shown to be almost periodically infinitesimally rigid if and only if it 
is strictly periodically infinitesimally rigid and the rigid unit mode (RUM) spectrum $\Omega(\C)$ is a singleton.
Moreover the almost periodic infinitesimal flexes of $\C$ are characterised in terms of a matrix-valued function  $\Phi_\C(z)$ on the $d$-torus $\bT^d$  determined 
by a full rank translation symmetry group and an associated motif of joints and bars. 
\end{abstract}

\maketitle

\section{Introduction}

The rigidity of a crystallographic bar-joint framework $\C$ in the Euclidean spaces $\bR^d$ with respect to periodic first order flexes is determined by a finite matrix,  the associated periodic rigidity matrix. For essentially generic frameworks of this type in two dimensions there is a deeper  combinatorial characterisation which is  a counterpart of Laman's characterisation of the infinitesimal rigidity of generic placements of finite graphs in the plane.  See Ross \cite{ros-kavli}. For related results and characterisations of other forms of periodic infinitesimal rigidity see Borcea and Streinu \cite{bor-str, bor-str-2}, Connelly, Shen and Smith \cite{con-she-smi}, Malestein and Theran \cite{mal-the}, Owen and Power \cite{owe-pow-crystal}, Power \cite{pow-poly, pow-affine} and Ross, Schulze and Whiteley \cite{ros-sch-whi}.

There is also extensive literature in condensed matter physics concerning the nature and multiplicity of low energy oscillations and rigid unit modes (RUMs) for material crystals in three dimensions. In this case  Bloch's theorem applies and the excitation modes are periodic modulo a phase factor. The set of  phase factors, or, equivalently, the set
of reduced wave vectors for the modes, provides what may be viewed as the RUM spectrum of the crystal.
See Dove et al  \cite{dov-exotic}, Giddy et al \cite{gid-et-al} and Wegner \cite{weg} for example. 
In Owen and Power \cite{owe-pow-crystal} and Power \cite{pow-poly}  the RUM spectrum was formalised in mathematical terms as a subset $\Omega(\C)$ of the $d$-torus $\bT^d$, or as an equivalent subset of $[0,1)^d$, which arises from a choice of translation group $\T$. This set of multi-phases is determined by a matrix-valued function $\Phi_\C(z)$ on $\bT^d$ with the value at  $z=\hat{1} =(1,1,\dots ,1)$ providing the corresponding periodic rigidity matrix for $\T$.

In the present article  we move beyond periodicity and consider infinitesimal flexes of a crystallographic bar-joint framework
which are almost periodic in the classical sense of Bohr. Such flexes are independent of any choice of translation group and so are intrinsic to $\C$ as an infinite bar-joint framework.
It is shown that $\C$ is  almost periodically infinitesimally rigid if and only if for some choice of translation group it 
is periodically infinitesimally rigid and the corresponding RUM spectrum is the minimal set $\{\hat{1}\}$.
More generally, we show how the almost periodic infinitesimal flexes of $\C$ are determined in terms of the matrix function $\Phi_\C(z)$. 

An ongoing interest in the analysis of low energy modes in material science is to quantify the implications of symmetry and local geometry for the set of RUM wave vectors. See for example Kapco et al \cite{kap-daw-riv-tre} where the phenomenon of extensible flexibility is related to the maximal symmetry and minimal density forms of an idealised  zeolite crystal framework. Here the term extensive flexibility corresponds to a maximal rigid unit mode spectrum $\Omega(\C)=\bT^3$. We show that for crystal frameworks whose RUM spectrum decomposes into a finite union of linear components there is a corresponding vector space decomposition of the almost periodic flex space. The flexes in these subspaces are periodic in specific directions associated with certain symmetries of the crystallographic point group. 
 
In Section \ref{Examples} we give a small gallery of crystal frameworks which display a variety of  periodic and almost periodic flexibility properties.

\section{Crystal frameworks and the RUM spectrum}

A bar-joint framework in the Euclidean space $\bR^d$ is a pair consisting of a simple undirected graph $G=(V,E)$ and an injective map $p:V\to \bR^d$.
A  \emph{(real) infinitesimal flex} of $(G,p)$ is a field of velocities, or velocity vectors, $u(v)$ assigned to the joints $p(v)$ such that for every edge $vw\in E$,
\[
(p(v)-p(w))\cdot (u(v)-u(w))=0
\]
If the above condition holds for all pairs $v,w\in V$ then $u$ is a {\em trivial} infinitesimal flex of $(G,p)$.
For convenience we let $p(E)$ denote the set of open line segments $(p(v),p(w))$ with $vw\in E$.
\begin{Definition}
A {\em crystal framework} $\mathcal{C}$ is a bar-joint framework $(G,p)$ for which 
there exist finite subsets $F_v\subseteq p(V)$ and $F_e\subseteq p(E)$ and a full rank translation group $\T$  such that 
\[p(V)=\{T(p_v):p_v\in F_v, \,T\in\T\}\]
\[p(E)=\{T(p_e):p_e\in F_e,\, T\in\T\}\]
\end{Definition}

The pair $(F_v,F_e)$ is called a {\em motif} for $\C$.
The elements of $F_v$ are called {\em motif vertices} and the elements of $F_e$ are called {\em motif edges}. 
The translation group $\T$ is necessarily of the form
\[\T = \{\sum_{j=1}^dk_ja_j: k_j\in \bZ\}\]
where $a_1,a_2,\ldots,a_d$ are linearly independent vectors in $\bR^d$.
The translation $x\mapsto x+\sum_{j=1}^dk_ja_j$ is denoted $T^k$ for each $k=(k_1,\ldots,k_d)\in \bZ^d$.
For each motif vertex $p(v)\in F_v$ and each $k\in\bZ^d$ we denote by $(v,k)$ the unique vertex for which $p(v,k)=T^k(p(v))$. For each motif edge $p_e\in F_e$ with $p_e=(p(v,l),p(w,m))$ we let $(e,k)$ denote the unique edge for which $p_{(e,k)} = ( p(v,l+k),p(w,m+k))$.

A velocity vector or flex $u$ for $\C$ is said to be \emph{strictly periodic} if $u(v,k)=u(v,0)$ for all $k\in \bZ^d$.
In the consideration of infinitesimal rigidity relative to general periodic flexes, or almost periodic flexes, it is convenient and natural to consider complex velocity vectors 
$u:F_v\times \bZ^d \to \bC^d$. Indeed, such vectors are infinitesimal flexes if and only if their real and imaginary parts are infinitesimal flexes.

\subsection{The symbol function and rigidity matrix}

We now define the symbol function $\Phi_\C(z)$ of a crystal framework and the rigidity matrix $R(\C)$ from which it is derived. For $k\in \bZ^d$ the associated monomial function $\bT^d \to \bC$ is written simply as $z^k$.

\begin{Definition}\label{d:Phi2}
Let $\C$ be a crystal framework in $\bR^d$ with  motif 
$(F_v, F_e)$ and for $e=vw\in F_e$ let $p(e)=p(v)-p(w)$. Then $\Phi_\C(z)$ is a matrix-valued function on $\bT^d$ whose rows are labelled by the edges of $F_e$ and whose columns are labelled by the vertex-coordinate pairs in $F_v\times \{1,\dots ,d\}$.
The row for an edge $e= (v,k)(w,l)$ with $v\neq w$ takes the form
\[\kbordermatrix{& & & & v & & & & w & & & \\
e & 0 & \cdots &0 & p(e)\overline{z}^k &0& \cdots&0 &- p(e)\overline{z}^l &0& \cdots &0 }\]
while if $v=w$ it takes the form
\[\kbordermatrix{& & & & v & & & \\
e & 0 & \cdots &0 & p(e)(\overline{z}^k - \overline{z}^l) &0& \cdots&0 }
\]
\end{Definition}
\medskip

\begin{Definition}\label{d:Phi2}
Let $\C$ be a crystal framework in $\bR^d$ with  motif 
$(F_v, F_e)$ and for $e=vw\in F_e$ let $p(e)=p(v)-p(w)$. Then $R(\C)$ is the matrix whose rows are labelled by the edges $(e,k)$ in  $F_e\times \bZ^d$ and whose columns are labelled by the pairs  $(v,k)$ in $F_v\times \bZ^d$.
The row for an edge $(e,k)=(v,l+k)(w,m+k)$, with $e=(v,l)(w,m) \in F_e$  takes the form
\[\kbordermatrix{& & & & (v,l+k) & & & & (w,m+k) & & & \\
(e,k) & \cdots & \cdots &0 & p(e) &0& \cdots&0 &- p(e) &0& \cdots &\cdots }\]
\end{Definition}
\medskip

It follows from this definition  that a velocity vector  $u:F_v\times \bZ^d \to \bR^d$ is an infinitesimal flex for $\C$ if and only if $R(\C)u=0$.

\subsection{The RUM spectrum}

A (real or complex) velocity vector $u$ is said to be \emph{phase-periodic}, or, more precisely, $\omega$-phase-periodic for the fixed multi-phase $\omega\in \bT^d$, if $u(v,k)=\omega^k u(v,0)$ for all  $v \in F_v$, $k\in \bZ^d$. Here  $\omega^k$ is the product $\omega_1^{k_1} \dots \omega_d^{k_d}$. We also write $u = b \otimes e_\omega$ for this vector,
where $b$ is the vector $(u(v,0))_v$ in $\bC^{d|F_v|}$ and $e_\omega$ is the
multi-sequence $(\omega^k)_{k\in\bZ^d}$. The following theorem is given in
\cite{pow-poly} and \cite{pow-seville}.

\begin{Theorem}
Let $\C$ be a crystal framework in $\bR^d$, let $\omega$ be a multi-phase
in $\bT^d$ and let  $u=b\otimes e_\omega$, with $b\in F_v\times \bC^d$, be a  $\omega$-phase-periodic velocity field. Then the following conditions are equivalent.
\begin{enumerate}[(i)]
\item $R(\C)u=0$.
\item $\Phi(\overline{\omega})b=0.$
\end{enumerate}
\end{Theorem}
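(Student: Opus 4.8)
The plan is to unwind both sides by direct substitution of the ansatz $u = b\otimes e_\omega$ into the two linear systems, and to observe that the $\bZ^d$-labelled system $R(\C)u=0$ decouples, across the translates $(e,k)$ of a fixed motif edge, into a single equation per motif edge that is exactly (a rescaling of) the corresponding row of $\Phi_\C(\overline\omega)b=0$. First I would fix an edge $e=(v,l)(w,m)\in F_e$ and $k\in\bZ^d$, so that $(e,k)$ joins $(v,l+k)$ and $(w,m+k)$. Reading off the row of $R(\C)$ indexed by $(e,k)$ from Definition~\ref{d:Phi2} (the $R(\C)$ version), the equation $(R(\C)u)_{(e,k)}=0$ reads
\[
p(e)\cdot\bigl(u(v,l+k)-u(w,m+k)\bigr)=0 .
\]
Now substitute the phase-periodic form $u(v,j)=\omega^{j}\,u(v,0)$ and $u(w,j)=\omega^{j}\,u(w,0)$, writing $b_v = u(v,0)$, $b_w=u(w,0)$. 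This gives
\[
p(e)\cdot\bigl(\omega^{l+k}b_v-\omega^{m+k}b_w\bigr)=0,
\]
and since $\omega^k\neq 0$ we may cancel the common factor $\omega^{k}$, leaving the single $k$-independent condition
\[
\omega^{l}\,p(e)\cdot b_v-\omega^{m}\,p(e)\cdot b_w=0 .
\]
Thus $R(\C)u=0$ is equivalent to this one equation holding for every $e\in F_e$.

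Next I would match this with $\Phi_\C(\overline\omega)b=0$. By the first Definition~\ref{d:Phi2}, the row of $\Phi_\C(z)$ indexed by $e=(v,l)(w,m)$ (case $v\neq w$) contributes, when evaluated at $z=\overline\omega$ and applied to $b$, the expression $p(e)\overline{(\overline\omega)}^{l}\,b_v - p(e)\overline{(\overline\omega)}^{m}\,b_w$; since $\overline{\overline\omega_j}=\omega_j$ this is precisely $\omega^{l}\,p(e)\cdot b_v-\omega^{m}\,p(e)\cdot b_w$, the same scalar obtained above. (Here I interpret $p(e)\,c$ for a scalar $c$ and the dot product with $b_v$ componentwise, matching the block structure of the $\Phi_\C$ row over the $d$ coordinates of $v$.) The degenerate case $v=w$ is handled identically: the $R(\C)$ row gives $p(e)\cdot(\omega^{l+k}-\omega^{m+k})b_v=0$, i.e.\ after cancelling $\omega^k$, $p(e)\cdot(\omega^{l}-\omega^{m})b_v=0$, which is exactly the $e$-row of $\Phi_\C(\overline\omega)b$ evaluated at $z=\overline\omega$. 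Since the equivalence holds row-by-row, it holds for the full systems, which is (i)$\iff$(ii).

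The only genuinely delicate point — and the one I would be most careful about — is bookkeeping the conjugations and the placement of the offsets $l,m$ versus $k$: one must be sure that the ``$\overline z^{\,k}$'' appearing in the $\Phi_\C$ row corresponds to the offset of the \emph{motif} edge's endpoints (the $l$ and $m$ above), not to the translate index, and that evaluating $\Phi_\C$ at $\overline\omega$ rather than $\omega$ is exactly what converts $\overline{z}^{\,l}$ into $\omega^{l}$. A secondary consistency check worth recording is that the factored-out scalar $\omega^{k}$ is never zero because $\omega\in\bT^d$, so no solutions are lost or gained in passing between the infinite and the finite system. Beyond these sign/exponent conventions the argument is a routine substitution, so I would keep the write-up short, presenting the single-edge computation once for $v\neq w$, remarking on the $v=w$ case, and concluding by quantifying over $F_e$.
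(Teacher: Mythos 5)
Your row-by-row substitution is correct: cancelling the unimodular factor $\omega^k$ reduces the $(e,k)$-equations of $R(\C)u=0$ to the motif-edge equations, which, after noting that evaluation at $z=\overline\omega$ turns $\overline{z}^{\,l}$ into $\omega^l$, are exactly the rows of $\Phi_\C(\overline\omega)b=0$, in both the $v\neq w$ and $v=w$ cases. The paper itself states this theorem without proof, citing \cite{pow-poly} and \cite{pow-seville}, and your argument is the same standard direct verification given there, so nothing further is needed.
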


\begin{Corollary} Let  $\C$ be  a crystal framework in $\bR^d$ with translation group $\T$. Then the following statements are equivalent.
\begin{enumerate}[(i)]
\item The $\T$-periodic real infinitesimal flexes of $\C$ are trivial. 
\item The $\T$-periodic complex infinitesimal flexes of $\C$ are trivial. 
\item The periodic rigidity matrix $\Phi_\C(1,\dots ,1)$ has rank equal to $d|F_v|-d$.
\end{enumerate}
\end{Corollary}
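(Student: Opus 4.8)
The plan is to read off everything from the preceding Theorem evaluated at the multi-phase $\omega=\hat{1}=(1,\dots,1)$, where $\overline{\omega}=\hat{1}$ as well and the $\hat{1}$-phase-periodic velocity fields are precisely the $\T$-periodic ones. The first step is the equivalence $(i)\Leftrightarrow(ii)$, which is a routine complexification argument: $(ii)\Rightarrow(i)$ is immediate since a real $\T$-periodic flex is in particular a complex one, and for $(i)\Rightarrow(ii)$ one writes a complex $\T$-periodic flex as $u=u_1+iu_2$ with $u_1,u_2$ real, observes that $u_1$ and $u_2$ are then themselves real $\T$-periodic infinitesimal flexes (the flex and periodicity conditions being $\bR$-linear), hence trivial by $(i)$, so that $u$, a real-linear combination of trivial flexes, is trivial. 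Here one uses that ``trivial'' is read off the bilinear dot product, so that the real and imaginary parts of a trivial complex flex are trivial real flexes.

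The second step identifies the $\T$-periodic complex flex space with a kernel. By the Theorem at $\omega=\hat{1}$, the correspondence $u\mapsto b:=(u(v,0))_{v\in F_v}\in\bC^{d|F_v|}$ is a linear isomorphism from the space of $\T$-periodic complex infinitesimal flexes onto $\ker\Phi_\C(\hat{1})$, whose dimension is $d|F_v|-\rank\Phi_\C(\hat{1})$. So I must pin down the image of the \emph{trivial} such flexes and show it is exactly the $d$-dimensional subspace $\{(t,t,\dots,t):t\in\bC^d\}$ of constant fields. One inclusion is direct: a constant velocity field $u(v,k)\equiv t$ is trivial, and inspection of the two row types of $\Phi_\C$ at $z=\hat{1}$ (where every $\overline{z}^k=1$) shows each row annihilates $(t,\dots,t)$, so the constants lie in $\ker\Phi_\C(\hat{1})$; in particular $\rank\Phi_\C(\hat{1})\le d|F_v|-d$ always. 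The reverse inclusion, that every $\T$-periodic trivial flex is constant, is where the full-rank hypothesis on $\T$ enters: a trivial flex is the restriction to $p(V)$ of an infinitesimal rigid motion $x\mapsto Sx+c$ with $S$ skew-symmetric (valid since $p(V)$ affinely spans $\bR^d$), and $\T$-periodicity forces $Sa_j=0$ for the basis $a_1,\dots,a_d$ of the translation lattice, whence $S=0$ and $u\equiv c$.

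The third step assembles the cycle. From the second step, $(ii)$ — that every $\T$-periodic complex flex is trivial — holds if and only if $\ker\Phi_\C(\hat{1})$ equals the $d$-dimensional space of constant fields, if and only if $\dim\ker\Phi_\C(\hat{1})=d$, if and only if $\rank\Phi_\C(\hat{1})=d|F_v|-d$; this is $(ii)\Leftrightarrow(iii)$. Combined with $(i)\Leftrightarrow(ii)$ from the first step, the three statements are equivalent. I expect the only genuinely non-formal point to be the claim in the second step that $\T$-periodicity collapses the trivial flexes down to the translations, which is exactly where the full-rank assumption on $\T$ is needed; the remainder is bookkeeping with the Theorem together with the standard complexification argument.
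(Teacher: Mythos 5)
Your proposal is correct and follows exactly the route the paper intends: the Corollary is stated as an immediate consequence of the preceding Theorem specialised at $\omega=\hat{1}$, where $\T$-periodic fields are the $\hat{1}$-phase-periodic ones, so the complex periodic flex space is $\ker\Phi_\C(1,\dots,1)$, the trivial periodic flexes reduce (via full rank of $\T$ killing the skew part of a rigid motion) to the $d$-dimensional space of constant fields, and the real/complex equivalence is the standard splitting into real and imaginary parts already noted in the paper. Your write-up simply makes explicit the steps the paper leaves to the reader; no gaps.
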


The {\em RUM spectrum} $\Omega(\C)$ of $\C$ is defined to be the set of multi-phases $\omega$ for which there exists a nonzero phase-periodic infinitesimal flex for $\C$ as an infinite-bar-joint framework, or, equivalently, as the set of multi-phases for which the rank of the matrix $\Phi_\C(\overline{\omega})$ is less than
$d|F_v|$.

\section{Almost periodic rigidity}

In this section we first outline the  proof of the fundamental approximation theorem for uniformly almost periodic functions and its counterpart for almost periodic sequences. These ensure that a function (or sequence) which is almost periodic in the sense of Bohr, is approximable by trigonometric functions (or sequences)
that are obtained in an explicit manner from convolution with Bochner-Fej\'er kernels. 
A convenient self-contained exposition of this fact for function approximation, due to Besicovitch \cite{bes-boh}, is given in Partington \cite{par-book}. The direct arguments there can be extended to almost periodic vector-valued functions on $\bZ^d$  and this embraces the setting of velocity fields relevant to the almost periodic rigidity of crystal frameworks. (See Definition \ref{apdef}.) The constructive approximation theorem that we require is given in Theorem \ref{t:APZapprox}.  This theorem together with Lemmas \ref{L1} and \ref{L2} lead to the almost periodic rigidity theorem.

\subsection{Almost periodic sequences}

First we recall the classical theory for univariable functions on $\bR$. 
The Fej\'er kernel functions are given by 
\[
K_n(x)=\sum_{|m|\leq n+1}(1-\frac{|m|}{n+1})e^{imx}, \quad x \in \bR
\]
The positivity of the $K_n$ and their approximate identity property under convolution with continuous periodic functions feature in a standard proof that a continuous $2\pi$-periodic function $f(x)$ on the real line is uniformly approximable by the explicit trigonometric functions 
\[
g_n(x) = \int_0^{2\pi} f(s)K_n(s-x)\frac{ds}{2\pi}
\] 
Almost periodic functions on the real line in the sense of Bohr are similarly uniformly approximable
by an explicit sequence of trigonometric polynomials that are determined by convolution with certain Bochner-Fej\'er kernels.

Note first that it is elementary that the functions $g_n(x)$ have the form
$g_n(x) = [f,R_xK_n]$ where $[\cdot,\cdot]$ is the mean inner product
\[ [f_1,f_2]= \lim_{T\to \infty} \frac{1}{2T}\int_{-T}^T f_1(s)\overline{f_2(s)}ds
\]
and $R_xK_n(s)=K_n(s-x)$.
It is classical that for a function $f(x)$ in $AP(\bR,\bC)$ there is a sequence
of Bochner-Fej\'er kernels $K_n'$  which provide, by the same formula, a uniformly approximating sequence of trigonometric polynomials $g_n(x)$. In this case  the frequencies $\lambda$ in the nonzero terms $ae^{i\lambda x}$ of these approximants appear in
a countable set derived (by rational division) from the spectrum $\Lambda(f)$ of $f$ defined by
\[
\Lambda(f) = \{\lambda \in \bR: [f(x),e^{i\lambda x}] \neq 0\}
\]
It follows from a Parceval inequality for almost periodic functions that this spectrum, which we refer to as the Bohr spectrum, is a well-defined finite or countable set.  

Similar considerations apply to the space $AP(\bZ,\bC)$ of almost periodic sequences. The approximants are general  trigonometric sequences, that is, sequences $(h(k))_{k\in \bZ}$ that have  a finite sum  form
\[
h(k) = \sum_{\omega=e^{i \lambda}:\lambda \in F} a_\lambda \omega^k
\]
so that, in our earlier notation,
\[
h=\sum_{\omega=e^{i \lambda}:\lambda \in F} a_\lambda e_\omega
\]
where $F$ is a finite subset of $\bR$.
The Bohr spectrum of a sequence $h$ in $AP(\bZ,\bC)$ is defined to be the set
\[
\Lambda (h) = \{\lambda \in \bR: [h,e_\omega]_\bZ \neq 0, \mbox{ for } \omega=e^{i\lambda}\}
\]
where
\[
[h_1,h_2]_\bZ= \lim_{N\to \infty}\frac{1}{2N+1}\sum_{|k|\leq N} h_1(k)\overline{h_2(k)}
\]
This spectrum of $\omega$ values is now a subset of $\bT$.
Partial counterparts of the Fej\'er kernel functions $K_n(x)$ are given by the Fej\'er sequences 
\[
K_{(n,\lambda)}= \sum_{|m|\leq n+1}(1-\frac{|m|}{n+1})e_{\omega^m}
\]
associated with a single frequency $\omega = e^{i\lambda}$.
In particular
\[
K_{(n,\lambda)}(k) = \sum_{|m|\leq n+1}(1-\frac{|m|}{n+1}){\omega^{mk}}
\]

The following fundamental approximation  theorem indicates the explicit construction of the Bochner-Fej\'er kernels $K_n'$ for $h$ as coordinate-wise products of appropriate Fej\'er sequences. 

\begin{Theorem}\label{t:APZapprox}
Let $h$ be a sequence in $AP(\bZ,\bC)$, let $\alpha_1, \alpha_2,\dots $ be a maximal subset of the Bohr spectrum $\Lambda(h)$ which is independent over $\bQ$,  and for $n=1,2, \dots $ let
\[
K_n' = \prod_{k=1}^n K_{(n.n!-1,\alpha_k/n!)}
\]
Then $h$ is the uniform limit of the sequence $g_1, g_2, \dots $ of trigonometric sequences in $AP(\bZ,\bC)$ given by
\[
g_n(k) = [h,R_kK_n']_\bZ, \quad k\in \bZ
\]
\end{Theorem}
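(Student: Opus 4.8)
The plan is to transcribe the classical Bochner--Fej\'er argument of Bohr and Besicovitch, in the self-contained form presented in \cite{par-book}, from the real line to $\bZ$; the passage to the vector-valued sequences of Definition \ref{apdef} is then coordinatewise and routine.

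First I would assemble the elementary apparatus for $AP(\bZ,\bC)$ that parallels the real-line theory: the translation-invariant mean $M(f)=\lim_N\frac{1}{2N+1}\sum_{|k|\le N}f(k)$; the stability of almost periodicity under products, translates and uniform limits; the Bessel inequality $\sum_\lambda|[h,e_\omega]_\bZ|^2\le M(|h|^2)$ (the sum over $\omega=e^{i\lambda}$), which shows that $\Lambda(h)$ is countable; and the orthonormality of the characters $e_\omega$ under $[\cdot,\cdot]_\bZ$. Writing the kernel in trigonometric form $K_n'=\sum_\mu c^{(n)}_\mu e_{e^{i\mu}}$, a short computation with the mean (using that $K_n'$ is real and even) gives
\[
g_n(k)=[h,R_kK_n']_\bZ=\sum_\mu c^{(n)}_\mu\,[h,e_{e^{i\mu}}]_\bZ\,(e^{i\mu})^k ,
\]
so that each $g_n$ is itself a trigonometric sequence in $AP(\bZ,\bC)$ whose Fourier--Bohr coefficient at $\mu$ is that of $h$ scaled by the multiplier $c^{(n)}_\mu$.

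Next I would record the two properties of the kernels that do the work. From the Fej\'er identity $K_{(N,\lambda)}(k)=\frac{1}{N+1}\bigl|\sum_{j=0}^{N}(e^{i\lambda})^{jk}\bigr|^2$ (the $|m|=N+1$ term of the defining sum being zero), each $K_{(N,\lambda)}$, hence the product $K_n'$, is nonnegative, real and even with mean value $1$; so $h\mapsto g_n$ is a positive averaging operation and $\|g_n\|_\infty\le\|h\|_\infty$. As for the multipliers: because the exponents $\alpha_1/n!,\dots,\alpha_n/n!$ used at the $n$-th stage are independent over $\bQ$, every frequency $\mu$ in the support of $K_n'$ has a \emph{unique} expression $\mu=\sum_k p_k(\alpha_k/n!)$, so that $c^{(n)}_\mu=\prod_k\bigl(1-|p_k|/(n\cdot n!)\bigr)\in[0,1]$. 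Since $\alpha_1,\alpha_2,\dots$ is a \emph{maximal} $\bQ$-independent subset of $\Lambda(h)$, every $\lambda\in\Lambda(h)$ is a finite rational combination of the $\alpha_k$, and the division by $n!$ together with the expanding range $|m|\le n\cdot n!$ then forces $\lambda$ into the support of $K_n'$ for all large $n$ and makes $c^{(n)}_\lambda\to1$, while $0\le c^{(n)}_\mu\le 1$ throughout.

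The substance of the theorem --- and the step I expect to be the main obstacle --- is that $g_n\to h$ \emph{uniformly}; this is not a formality, since a trigonometric sequence with small Fourier--Bohr coefficients need not be small in the supremum norm, so the multiplier estimates alone do not suffice. Here positivity is essential: as $K_n'$ is real, even and of mean $1$,
\[
g_n(k)-h(k)=M_\tau\bigl((h(k-\tau)-h(k))\,K_n'(\tau)\bigr),
\]
and for $\varepsilon>0$ I would split the mean over $\tau$ according to whether $\tau$ is an $\varepsilon$-almost period of $h$ --- the set of these being relatively dense by the definition of Bohr almost periodicity. On the almost periods $|h(k-\tau)-h(k)|<\varepsilon$, so that portion is at most $\varepsilon\,M(K_n')=\varepsilon$; on the complement the increment is at most $2\|h\|_\infty$, so everything reduces to showing that the $K_n'$-mean of the indicator of the non-almost-periods tends to $0$ --- that is, that the mass of $K_n'$ concentrates on the almost periods of $h$. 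This is the technical heart. Through the product structure $K_n'(\tau)=\prod_k K_{(n\cdot n!-1,\,\alpha_k/n!)}(\tau)$ it reduces, factor by factor, to the classical fact that a one-dimensional Fej\'er kernel carries vanishing mean-mass outside a fixed neighbourhood of the origin, together with the fact --- to be derived from the spectral data of $h$ and the $n!$-scaling of the exponents --- that $\tau$ is an $\varepsilon$-almost period of $h$ once the distances from $\alpha_k\tau$ to $2\pi\bZ$ are small for the finitely many relevant indices $k$. Combining the two contributions gives $\sup_k|g_n(k)-h(k)|<2\varepsilon$ for all large $n$, which is the assertion. The remaining points are routine bookkeeping: the coordinatewise reduction for vector-valued $h$, the separate trivial treatment of the constant term, and the standard modification for exponents generating a finite cyclic subgroup of $\bT$.
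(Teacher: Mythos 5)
There is a genuine gap, and it sits exactly at the step you yourself label the technical heart. Your kernel bookkeeping (positivity, mean one, the multiplier computation via the unique representation $\mu=\sum_k p_k\alpha_k/n!$, and the concentration of the $K_n'$-mass on the set where each $(\alpha_k/n!)\tau$ is close to $2\pi\bZ$) is fine, but the decisive implication --- that such $\tau$ are \emph{uniform} $\epsilon$-almost periods of $h$ --- is not something that follows from ``the spectral data of $h$ and the $n!$-scaling of the exponents'' by routine means. The apparatus you assemble (mean, Bessel, orthonormality, Parseval) only controls $[\,|R_\tau h-h|^2\,]$ in the Bohr mean: it shows that near-vanishing of finitely many phases $\lambda\tau$ ($\lambda\in\Lambda(h)$) makes $R_\tau h-h$ small in the mean-square sense, and an almost periodic sequence with small mean-square norm need not be small in the supremum norm. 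The statement you need is Bohr's theorem relating Fourier exponents to translation numbers, and its proof requires precisely the ingredient your proposal never mentions: the uniqueness theorem, i.e.\ that a nonzero $h\in AP(\bZ,\bC)$ has nonempty Bohr spectrum, which is established via the Bochner characterisation of almost periodicity (precompactness of the set of translates of $h$ in the uniform norm). Indeed, granted uniqueness plus Bochner one gets your missing implication (any sequence $\tau_n$ with $\lambda\tau_n\to 0$ mod $2\pi$ on $\Lambda(h)$ has $R_{\tau_n}h\to h$ uniformly, since every uniform limit point of $R_{\tau_n}h$ has the same Fourier--Bohr coefficients as $h$); but as written your argument is circular at that point, since the classical route to Bohr's translation-number theorem is through the approximation theorem you are trying to prove.

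The paper's intended proof (it only outlines it, following Besicovitch--Bohr as presented in \cite{par-book}, and explicitly flags the nonemptiness of the Bohr spectrum via the Bochner condition as ``the main ingredient'') finishes differently: since $K_n'\geq 0$ with mean $1$, each $g_n$ lies in the closed convex hull of the translates of $h$, which is compact in the uniform norm by Bochner's criterion; the multiplier computation shows the Fourier--Bohr coefficients of $g_n$ converge to those of $h$; hence every uniform limit point of $(g_n)$ has the same coefficients as $h$ and equals $h$ by the uniqueness theorem, so $g_n\to h$ uniformly. If you replace your concentration-on-almost-periods step by this compactness-plus-uniqueness argument (or, equivalently, first prove the uniqueness theorem from Bochner's criterion and then derive the translation-number statement you wanted), your outline becomes a correct proof; without it, the central claim is unsupported.
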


The main ingredient in the proof of the theorem is that the Bohr spectrum is nonempty if $h\neq 0$ and the arguments for this depend on the equivalence of Bohr almost periodicity with the Bochner condition that the set of translates of $h$ is precompact for the uniform norm. (See \cite{par-book}.)

The arguments leading to Theorem \ref{t:APZapprox} can be generalised to obtain an exact counterpart theorem for $AP(\bZ^d,\bC^r)$. 
The approximating trigonometric sequences $g$ now  have a finite sum  form
\[
g = \sum_{\omega \in F \subset \bT^d} a_\omega \otimes e_\omega
\]
where $a_\omega \in \bC^r$ and $e_\omega$, for $\omega =(\omega_1, \dots ,\omega_d)$ in $\bT^d$, is the pure frequency sequence $e_{\omega_1}\otimes \dots \otimes e_{\omega_d}$ in $AP(\bZ^d,\bC)$ with
\[
e_\omega(k)=\omega_1^{k_1}\dots\omega_d^{k_d}
\] 
The Bohr spectrum $\Lambda(h)$ is similarly defined and is a countable subset of points $\omega$ in $\bT^d$ which we freely identify with a countable subset
of points $\lambda$ in $[0,2\pi)^d$. For notational convenience we state the general theorem only in the case $d=2$.

The metric which is appropriate in our context for the approximation of velocity fields is the uniform metric or norm
 $\|\cdot\|_\infty$; for velocity fields $h, h'$ we have
\[
\|h -h'\|_\infty = \sup_{k,\kappa} \{\|h(k,\kappa) - h'(k,\kappa)\|_2 \}
\]

\begin{Theorem}
\label{MultiApprox}
Let $h$ be a sequence in $AP(\bZ^2,\bC^r)$, let $(\alpha_1,\beta_1), (\alpha_2, \beta_2), \dots $ be a maximal subset of $\Lambda(h) \subset [0,2\pi)^2$ which is independent over $\bQ$,  and for $n=1,2, \dots $ let
\[
K_n^{(2)} = (\prod_{k=1}^n K_{(n.n!-1,\alpha_k/n!)})
(\prod_{k=1}^n K_{(n.n!-1,\beta_k/n!)}) \in AP(\bZ^2,\bC)
\]
Then $h$ is the uniform limit of the sequence $g_1, g_2, \dots $ of trigonometric sequences in  $AP(\bZ^2,\bC^r)$ given by
\[
g_n(k) = [h,R_kK_n^{(2)}]_{\bZ^2}, \quad k\in \bZ^2
\]
\end{Theorem}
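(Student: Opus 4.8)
\medskip

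The plan is to transpose, step by step, the one-variable argument behind Theorem \ref{t:APZapprox} (Besicovitch's method, as in Partington \cite{par-book}), with the kernels $K_n^{(2)}$ taking the place of the univariate Fej\'er sequences. One first records the structural facts about $AP(\bZ^2,\bC^r)$, each proved exactly as in the classical theory by averaging over the cubes $Q_N=\{k\in\bZ^2:|k|_\infty\le N\}$: the mean $M(g)=\lim_{N\to\infty}|Q_N|^{-1}\sum_{k\in Q_N}g(k)$ exists for $g\in AP(\bZ^2,\bC^r)$ and is translation invariant; the semi-inner product $[g_1,g_2]_{\bZ^2}=M(g_1\overline{g_2})$ is well defined, with $[R_kg,e_\omega]_{\bZ^2}=\overline{\omega}^{\,k}[g,e_\omega]_{\bZ^2}$; and the Parseval identity $\sum_{\omega\in\bT^2}\|[g,e_\omega]_{\bZ^2}\|_2^2=M(\|g\|_2^2)$ holds, so that $\Lambda(g)$ is countable and $\Lambda(g)=\emptyset$ forces $g=0$. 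This last implication is where Bochner's precompactness characterisation of $AP(\bZ^2,\bC^r)$ enters, exactly as in the univariate case.

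Next one analyses the kernels. Each univariate Fej\'er sequence is nonnegative, since
\[
K_{(m,\mu)}(j)=\sum_{|s|\le m+1}\Big(1-\tfrac{|s|}{m+1}\Big)e^{is\mu j}=\frac{1}{m+1}\Big|\sum_{s=0}^{m}e^{is\mu j}\Big|^{2},
\]
and its constant term, hence its mean, equals $1$; thus $K_n^{(2)}$, a product of such sequences over the two coordinate blocks, is a nonnegative trigonometric sequence with $M(K_n^{(2)})=1$. Writing $K_n^{(2)}=\sum_\omega c_\omega^{(n)}e_\omega$, the nonzero frequencies are exactly the rational divisions of the chosen $\bQ$-independent basis $\{(\alpha_k,\beta_k)\}\subseteq\Lambda(h)$, the coefficients satisfy $|c_\omega^{(n)}|\le M(K_n^{(2)})=1$, and the parameters $n\cdot n!-1$ and $n!$ are chosen so that $c_\omega^{(n)}\to1$ as $n\to\infty$ for each fixed $\omega\in\Lambda(h)$ --- the same elementary computation as in the proof of Theorem \ref{t:APZapprox}. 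Using the translation covariance together with the substitution $j=k+l$, one obtains the two identities
\[
g_n(k)=[h,R_kK_n^{(2)}]_{\bZ^2}=\sum_\omega c_\omega^{(n)}[h,e_\omega]_{\bZ^2}\,e_\omega(k)=M\big(l\mapsto K_n^{(2)}(l)\,h(k+l)\big),
\]
exhibiting $g_n$ both as a finite trigonometric sum in $AP(\bZ^2,\bC^r)$ of the asserted form and as an average, with respect to the mean, of the translates of $h$.

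It remains to prove $\|h-g_n\|_\infty\to0$; this is the crux and the step I expect to be the main obstacle. Since $[h-g_n,e_\omega]_{\bZ^2}=(1-c_\omega^{(n)})[h,e_\omega]_{\bZ^2}$, the Parseval identity gives $M(\|h-g_n\|_2^2)=\sum_\omega|1-c_\omega^{(n)}|^2\|[h,e_\omega]_{\bZ^2}\|_2^2\to0$ by dominated convergence, using $|1-c_\omega^{(n)}|\le2$ and the summability of $\omega\mapsto\|[h,e_\omega]_{\bZ^2}\|_2^2$. To upgrade this to uniform convergence, set $\psi(t)=\|h-R_{-t}h\|_\infty$; from $K_n^{(2)}\ge0$, $M(K_n^{(2)})=1$ and a contraction property of the kernel average one gets $\|(h-g_n)(k)-(h-g_n)(k+t)\|_2\le2\psi(t)$ for all $k$, so that every member of $\{h-g_n:n\ge1\}$ has the same relatively dense sets of almost periods as $h$; hence the translates $\{R_k(h-g_n):k\in\bZ^2,\ n\ge1\}$ form a uniformly bounded, uniformly equicontinuous family, precompact in $\ell^\infty(\bZ^2,\bC^r)$. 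Any such family that tends to $0$ in the mean $M(\|\cdot\|_2^2)$ must tend to $0$ in $\|\cdot\|_\infty$: otherwise a subsequence has norm $\ge\delta/2$ near some points $k_n$, hence norm $>\delta/4$ on the relatively dense sets $k_n+\{t:\psi(t)<\delta/8\}$, keeping $M(\|h-g_n\|_2^2)$ bounded away from $0$, a contradiction. (Alternatively one argues directly that $\|h-g_n\|_\infty\le M(l\mapsto K_n^{(2)}(l)\psi(l))\to0$, by showing that the mass of $K_n^{(2)}$ concentrates on the $\varepsilon$-almost-periods of $h$ as the base frequencies $\alpha_k/n!$ and $\beta_k/n!$ are refined --- the two-coordinate version of the concentration estimate used in the univariate proof, obtained by treating the $\alpha$-block and the $\beta$-block separately and intersecting.) The only genuinely new work beyond Theorem \ref{t:APZapprox} is this two-coordinate bookkeeping, and it is routine once the one-variable statement is in hand.
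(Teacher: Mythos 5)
Your plan is the right one and is what the paper intends: the paper gives no detailed proof of Theorem \ref{MultiApprox}, deferring to the Besicovitch--Partington construction and flagging as the main ingredient that a nonzero almost periodic sequence has nonempty Bohr spectrum (via the Bohr--Bochner equivalence). Your kernel bookkeeping (positivity, $|c^{(n)}_\omega|\le M(K^{(2)}_n)$, $c^{(n)}_\omega\to 1$ for $\omega\in\Lambda(h)$, the two expressions for $g_n$) is essentially in order, modulo the discrete-case caveat that if some $\alpha_k$ is a rational multiple of $2\pi$ (e.g.\ a frequency at $\omega=-1$, which does occur for crystal frameworks) then $M(K^{(2)}_n)$ need not equal $1$ exactly; the classical sequence theory handles this by working with frequencies mod $2\pi$, and the paper glosses it too. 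The genuine gap is at the step you yourself call the crux: you obtain mean convergence from the full Parseval identity, justified as ``proved exactly as in the classical theory by averaging over the cubes''. Averaging over cubes gives the existence of the mean and Bessel's inequality, not Parseval; and Bessel does not suffice, since $M(\|h-g_n\|_2^2)\to M(\|h\|_2^2)-\sum_\omega\|[h,e_\omega]_{\bZ^2}\|_2^2$, so mean convergence of the Bochner--Fej\'er sums is \emph{equivalent} to Parseval. In the elementary development the paper cites, Parseval is deduced \emph{from} the approximation theorem, so as written your argument is circular; and your parenthetical alternative (that the mass of $K^{(2)}_n$ concentrates on the $\varepsilon$-almost periods of $h$) presupposes Bohr's description of almost periods in terms of the spectrum, which is of the same depth as the theorem being proved.

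The standard repair --- and the route the paper's outline points to --- avoids Parseval altogether. Your own observations already show that the $g_n$ are bounded by $\|h\|_\infty$ and inherit the $\varepsilon$-almost periods of $h$ (contractivity and translation-covariance of $h\mapsto g_n$), so $\{g_n\}$ is a uniformly bounded, equi-almost-periodic family in $\ell^\infty(\bZ^2,\bC^r)$ and hence totally bounded (diagonal extraction on finite boxes together with the common almost periods; no equicontinuity is needed on $\bZ^2$). Any uniform limit point $g$ of a subsequence is almost periodic with $[g,e_\omega]_{\bZ^2}=\lim_n c^{(n)}_\omega[h,e_\omega]_{\bZ^2}=[h,e_\omega]_{\bZ^2}$ for every $\omega$, so $g-h$ has empty Bohr spectrum and the uniqueness theorem --- the ``main ingredient'' the paper isolates, proved via the Bochner precompactness characterisation --- forces $g=h$; since every subsequence has a further subsequence converging uniformly to $h$, the whole sequence $(g_n)$ converges uniformly to $h$. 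Alternatively you may quote Parseval for $AP(\bZ^2,\bC^r)$ from the literature (e.g.\ via the hull/compact group and Haar measure), but then it is not obtained ``by averaging'' and your proof is no longer self-contained in the way the write-up suggests.
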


Note that here $[\cdot,\cdot]_{\bZ^2}$ is the natural well-defined sesquilinear map from $ AP(\bZ^2,\bC^r)\times  AP(\bZ^2,\bC)$ to $\bC^r$.

We remark that the theory of uniformly almost periodic multi-variable functions on $\bR^d$ and on $\bZ^d$ is part of the abstract theory of almost periodic functions on locally compact abelian groups due to Bochner and von Neumann \cite{boc-von}. For further detail see also Levitan and Zhikov \cite{lev-zhi}, Loomis \cite{loo-book} and Shubin \cite{shu}.

\subsection{Almost periodic rigidity}

We now characterise when a crystal framework admits no nontrivial almost periodic infinitesimal flexes  and in this case we say that it is almost periodically rigid. This is evidently a form of rigidity which is independent of any choice of translation group.

For a crystal framework $\C$ with full rank translation group $\T$ various linear transformations  may be associated with the rigidity matrix $R(\C)$. These transformations are restrictions
of the induced linear transformation
\[
R(\C): \mathbb{C}^{\mathbb{Z}^d\times |F_v|} \otimes \mathbb{C}^{d}\to \mathbb{C}^{\mathbb{Z}^d\times |F_e|} \otimes \mathbb{C}
\]
where  $ \mathbb{C}^{\mathbb{Z}^d\times |F_v|} \otimes \mathbb{C}^{d}$
is the vector space of all velocity fields on $p(V)$, that is, the space of functions $h:{\mathbb{Z}^d\times |F_v|} \to \bC^d$. The codomain
of $R(\C)$
is the vector space of complex-valued functions on the set of edges.

The right shift operators on the domain and codomain of $R(\C)$ for the integral vector $l$ in $\bZ^d$ are denoted by $R^V_l$ and $R^E_l$ respectively. Here the right shift of a sequence $h(k,\kappa)$ by $l$ is the sequence $h(k-l,\kappa)$. We note that 
\[R(\C)\circ R^V_l = R^E_l\circ R(\C)\]

\begin{Definition}\label{apdef}
Let  $h:{\mathbb{Z}^d\times F_v} \to \bC^d$ be a velocity field.
\begin{enumerate}
\item An integral vector $l$ in $\bZ^d$ is an $\epsilon$-translation vector for $h$ if $\|R^V_l(h)-h\|_\infty < \epsilon$.
\item The velocity field $h$ is Bohr almost periodic if for every $\epsilon >0$ the set of $\epsilon$-translation vectors $l$ is relatively dense in $\bR^d$.
\end{enumerate}
\end{Definition}

\begin{Lemma}\label{L1} Let $g$ be the vector-valued trigonometric  multi-sequence 
with finite sum representation
\[
g= \sum_{\omega \in F \subset \bT^d} a_\omega \otimes e_\omega
\] 
with nonzero coefficients $a_\omega$ in $\bC^{d|F_v|}$. If $g$ is a nonzero infinitesimal flex for $\C$ then each component sequence
$a_\omega \otimes e_\omega$ is a nonzero $\omega$-phase-periodic infinitesimal flex. 
\end{Lemma}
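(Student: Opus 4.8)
The plan is to exploit the fact that the rigidity operator $R(\C)$ commutes with the shift operators $R^V_l$, so it respects the Fourier decomposition of a trigonometric multi-sequence. First I would observe that since $g$ has the finite sum form $g = \sum_{\omega \in F} a_\omega \otimes e_\omega$ with distinct frequencies $\omega \in F \subset \bT^d$, applying $R(\C)$ gives $R(\C)g = \sum_{\omega \in F} R(\C)(a_\omega \otimes e_\omega)$, and by the intertwining relation $R(\C)\circ R^V_l = R^E_l \circ R(\C)$ each summand $R(\C)(a_\omega \otimes e_\omega)$ is itself $\omega$-phase-periodic as an edge-indexed sequence: indeed $a_\omega \otimes e_\omega$ is an $\omega$-phase-periodic velocity field by definition, and shift-equivariance carries this through, so $R^E_l(R(\C)(a_\omega\otimes e_\omega)) = \omega^l\, R(\C)(a_\omega\otimes e_\omega)$ for all $l$.

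The key step is then a linear-independence argument for the pure-frequency sequences. The edge-indexed sequence $R(\C)g$ is a finite sum $\sum_{\omega \in F} c_\omega \otimes e_\omega$ where $c_\omega = R(\C)(a_\omega\otimes e_\omega)$ evaluated at $k=0$ (on the codomain). Since $g$ is an infinitesimal flex, $R(\C)g = 0$, i.e. $\sum_{\omega\in F} c_\omega \otimes e_\omega = 0$ as a multi-sequence. Because the characters $e_\omega$ for distinct $\omega \in \bT^d$ are linearly independent over $\bC$ as functions on $\bZ^d$ — this is the standard orthogonality of characters, or can be seen by evaluating the mean inner product $[\cdot,e_\omega]_{\bZ^d}$ — each coefficient $c_\omega$ must vanish. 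Hence $R(\C)(a_\omega \otimes e_\omega) = 0$ for every $\omega \in F$, which is exactly the statement that each component $a_\omega\otimes e_\omega$ is an infinitesimal flex of $\C$. Since $a_\omega \neq 0$, the field $a_\omega \otimes e_\omega$ is a nonzero $\omega$-phase-periodic velocity field, and being a flex it is a nonzero $\omega$-phase-periodic infinitesimal flex.

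The main obstacle is making the linear independence of the $e_\omega$ rigorous in the vector-valued, multi-sequence setting: one wants to pass from $\sum_{\omega\in F} c_\omega\otimes e_\omega = 0$ to $c_\omega = 0$ componentwise. I would do this by fixing a scalar coordinate of $\bC^{d|F_e|}$, reducing to scalar trigonometric sequences $\sum_{\omega\in F} (c_\omega)_j\, e_\omega = 0$ in $AP(\bZ^d,\bC)$, and then applying $[\,\cdot\,,e_{\omega_0}]_{\bZ^d}$ to isolate $(c_{\omega_0})_j$, using that $[e_\omega,e_{\omega_0}]_{\bZ^d} = \delta_{\omega,\omega_0}$ for $\omega,\omega_0$ in the finite set $F$. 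This is the only genuinely analytic point; the rest is formal manipulation with the shift-equivariance relation already recorded in the paper. Finally, to identify $R(\C)(a_\omega\otimes e_\omega) = 0$ with ``$a_\omega\otimes e_\omega$ is an infinitesimal flex'' I invoke the characterisation following Definition \ref{d:Phi2} that $R(\C)u = 0$ iff $u$ is an infinitesimal flex.
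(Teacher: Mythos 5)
Your argument is correct, and it reaches the conclusion by a route that is recognisably different from the paper's, though both ultimately rest on the same orthogonality of the characters $e_\omega$. The paper works on the vertex side: it introduces the averaging operators $R^V(\omega,N)$ and $R^E(\omega,N)$, shows that $R^V(\omega,N)g$ converges uniformly to the single component $a_\omega\otimes e_\omega$ (and annihilates the other components), and then passes $R(\C)$ through the limit, which requires noting that $R(\C)$ is a bounded, shift-commuting operator from $\ell^\infty(\bZ^d\times F_v,\bC^d)$ to $\ell^\infty(\bZ^d\times F_e,\bC)$; this yields $R(\C)(a_\omega\otimes e_\omega)=\lim_N R^E(\omega,N)R(\C)g=0$. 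You instead apply $R(\C)$ first: by linearity and the intertwining relation $R(\C)\circ R^V_l=R^E_l\circ R(\C)$, each image $R(\C)(a_\omega\otimes e_\omega)$ is a pure frequency-$\omega$ edge sequence, so $R(\C)g$ is a finite trigonometric sum $\sum_{\omega\in F}c_\omega\otimes e_\omega$, and linear independence of distinct characters on $\bZ^d$ (which you verify coordinatewise via $[e_\omega,e_{\omega_0}]_{\bZ^d}=\delta_{\omega,\omega_0}$ --- the same Ces\`aro averaging the paper packages into $R^E(\omega,N)$) forces each $c_\omega=0$, i.e.\ $R(\C)(a_\omega\otimes e_\omega)=0$. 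What your version buys is that no continuity of $R(\C)$ in the uniform norm is needed: only linearity and shift-equivariance, with the limiting argument confined to scalar finite trigonometric sums. What the paper's version buys is that the operators $R^V(\omega,N)$, $R^E(\omega,N)$ and the boundedness of $R(\C)$ are set up once and reused in the same spirit in Lemma \ref{L2} and the Bochner--Fej\'er approximation. One cosmetic point: with the paper's convention that the right shift by $l$ sends $h(k)$ to $h(k-l)$, the eigenvalue relation should read $R^E_l\bigl(R(\C)(a_\omega\otimes e_\omega)\bigr)=\bar{\omega}^{\,l}\,R(\C)(a_\omega\otimes e_\omega)$ rather than $\omega^{l}$; this does not affect your argument, since either way the image is $\omega$-phase-periodic and the coefficient extraction goes through unchanged.
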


\begin{proof}
For $\omega = (\omega_1,\dots ,\omega_d) \in \bT^d$ and $N\in \bN$ let $R^V(\omega, N)$  be the linear map on the normed space $\ell^\infty(\bZ^d\times F_v, \bC^d)$ given in terms of the right shift operators $R^V_k$, $k\in \bZ^d$, by
\[
R^V(\omega,N)=\frac{1}{(N+1)^d}\sum_{k:0\leq k_i\leq N} \bar{\omega}^kR^V_{-k}
\]
Similarly, let $R^E(\omega,N)$ be the linear map on $\ell^\infty(\bZ^d\times F_e, \bC)$ given by,
\[
R^E(\omega,N)=\frac{1}{(N+1)^d}\sum_{k:0\leq k_i\leq N} \overline{\omega}^kR^E_{-k}
\]
The sequence $R(\omega',N)(a_\omega\otimes e_\omega)$
converges uniformly to $a_\omega\otimes e_\omega$ when $\omega' = \omega$ and to the zero sequence otherwise since for each $(l,v_\kappa)\in\bZ^d\times F_v$,
\begin{eqnarray*}
\lim_{N\to\infty} R^V(\omega',N)(a_\omega\otimes e_\omega)(l,v_\kappa) 
&=& \lim_{N\to\infty} \frac{1}{(N+1)^d}\sum_{k:0\leq k_i\leq N} \overline{\omega'}^kR^V_{-k}(a_\omega\otimes e_w)(l,v_\kappa) \\ 
&=&\lim_{N\to\infty} \frac{1}{(N+1)^d}\sum_{k:0\leq k_i\leq N} \overline{\omega'}^k\omega^{l+k}a_\omega\\
&=&\left( \lim_{N\to\infty} \frac{1}{(N+1)^d}\sum_{k:0\leq k_i\leq N} \overline{\omega'}^k\omega^{k}\right)\omega^la_\omega
\end{eqnarray*}
and 
\[ \lim_{N\to\infty} \frac{1}{(N+1)^d}\sum_{k:0\leq k_i\leq N} \overline{\omega'}^k\omega^{k}
=\left\{\begin{array}{ll}
1& \mbox{ if }\omega=\omega' \\
0 & \mbox{ if }\omega\not=\omega' \\
\end{array}\right.\]
Note also that $R(\C)$ is a bounded linear transformation from
$\ell^\infty(\bZ^d\times F_v,\bC^d)$ to $\ell^\infty(\bZ^d\times F_e,\bC)$ which commutes with the right shift operators. Thus
 if $\omega$ is a multi-frequency for $g$ then
\[
R(\C)(a_{\omega}\otimes e_{\omega})  = \lim_{N\to \infty}R(\C)(R^V(\omega,N)g) =\lim_{N\to \infty}R^E(\omega,N)R(\C)g = 0
\] 
\end{proof}

\begin{Lemma}\label{L2}
Let $K$ be a trigonometric polynomial in $AP(\bZ^d\times F_v,\bC)$ and let $h$ be an infinitesimal flex for $\C$ in $AP(\bZ^d\times F_v,\bC^{d})$. Then the mean convolution multi-sequence $g:\bZ^d\times F_v\to\bC^d$ given by $g(k,v_\kappa)=[h,R_k(K)]_{\bZ^d}$ is an infinitesimal flex  for $\C$.
\end{Lemma}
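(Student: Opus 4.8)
The plan is to identify $g$ explicitly as a vector‑valued trigonometric multi‑sequence whose pure‑frequency components are each infinitesimal flexes, and then to conclude by linearity of $R(\C)$.

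First I would write $K$ in its finite sum form $K=\sum_{\omega\in F}b_\omega e_\omega$ with $F\subset\bT^d$ finite and $b_\omega\in\bC$ (this is the shape of the Bochner--Fej\'er kernels of Theorem \ref{MultiApprox}), and record the elementary identity $R_k(e_\omega)=\overline{\omega}^k e_\omega$, which holds because each $\omega_i$ has modulus one. Substituting this into the defining formula for $g$ and using that $[\cdot,\cdot]_{\bZ^d}$ is conjugate‑linear in its second argument gives
\[
g=\sum_{\omega\in F}\overline{b_\omega}\,(a_\omega\otimes e_\omega),\qquad a_\omega:=[h,e_\omega]_{\bZ^d}\in\bC^{d|F_v|}.
\]
Thus $g$ is a vector‑valued trigonometric multi‑sequence, and since $R(\C)$ is linear it suffices to show that each component $a_\omega\otimes e_\omega$ is an infinitesimal flex, i.e.\ that $R(\C)(a_\omega\otimes e_\omega)=0$.

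For this I would realise $a_\omega\otimes e_\omega$ as a uniform limit of averages of shifts of $h$. With the averaging operators $R^V(\omega,N)$ from the proof of Lemma \ref{L1} one has $\|R^V(\omega,N)h-a_\omega\otimes e_\omega\|_\infty\to 0$ as $N\to\infty$: pointwise this is the convergence of the Bohr means of the almost periodic sequence $k\mapsto\overline{\omega}^k\,h(l+k)$, and the convergence is uniform in $l$ because $h$ is uniformly almost periodic. Concretely one approximates $h$ uniformly by trigonometric multi‑sequences, uses that each $R^V(\omega,N)$ is a contraction for $\|\cdot\|_\infty$ (being an average of isometric shift operators), and applies the explicit computation already carried out in the proof of Lemma \ref{L1} to these approximants. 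Now $R(\C)$ is a bounded linear transformation commuting with the right shifts, so $R(\C)\circ R^V(\omega,N)=R^E(\omega,N)\circ R(\C)$, and $R(\C)h=0$ by hypothesis; hence
\[
R(\C)(a_\omega\otimes e_\omega)=\lim_{N\to\infty}R(\C)\big(R^V(\omega,N)h\big)=\lim_{N\to\infty}R^E(\omega,N)\big(R(\C)h\big)=0 .
\]
Summing over $\omega\in F$ gives $R(\C)g=0$, so $g$ is an infinitesimal flex for $\C$.

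The one step that is more than bookkeeping is the uniform convergence $R^V(\omega,N)h\to a_\omega\otimes e_\omega$: the proof of Lemma \ref{L1} delivers this only for pure‑frequency multi‑sequences, and upgrading it to an arbitrary almost periodic $h$ is precisely where the constructive approximation theory is needed, together with the fact that the $R^V(\omega,N)$ are uniform contractions. Everything else is forced by the commutation relation $R(\C)\circ R^V_l=R^E_l\circ R(\C)$, the boundedness of $R(\C)$, and the hypothesis $R(\C)h=0$.
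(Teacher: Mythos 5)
Your proposal is correct and follows essentially the same route as the paper: after reducing to a single frequency $e_\omega$, you realise the pure-frequency component as a uniform limit of $\bar\omega$-weighted averages of translates of $h$ (each a flex by shift-invariance and $R(\C)h=0$), establish the uniform convergence first for trigonometric sequences and then for general almost periodic $h$ via contractivity of the averaging operators, and conclude by boundedness of $R(\C)$ on $\ell^\infty$. The only cosmetic difference is that you make the decomposition $g=\sum_\omega \overline{b_\omega}\,(a_\omega\otimes e_\omega)$ explicit, whereas the paper reduces by linearity in $K$ at the outset and works directly with the averages $g_N$.
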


\begin{proof}
By linearity it suffices to assume that $K$ is the elementary multi-sequence $e_\omega$, so that $K(k)=\omega^k$ for $k\in \bZ^d$. Then $g$ is the uniform limit of the sequence $(g_N)$ where
\[
g_N(k,v_\kappa) = \frac{1}{(N+1)^d} \sum_{1\leq s_i\leq N} \bar{\omega}^s(R_{-s}h)(k,v_\kappa)
\]
To see this note that the convergence is uniform if $h$ is a trigonometric sequence. Since the linear maps $h \to g_N$ are contractive for the uniform norm, uniform convergence holds for a general almost periodic velocity sequence. Thus, since the vector space of infinitesimal flexes is invariant under translation it follows that $R(\C)g_N=0$ for each $N$ and hence that $R(\C)g=0$.
\end{proof}

The following theorem shows that a crystal framework is almost periodically rigid if and
only if it is periodically rigid and the RUM spectrum is trivial.

\begin{Theorem}
\label{APThm}
Let $\C$ be a crystallographic bar-joint  framework in $\bR^d$.
The following statements are equivalent.
\begin{enumerate}[(i)]
\item
Every almost periodic infinitesimal flex of $\C$ is trivial.
\item
Every strictly periodic infinitesimal flex of $\C$ is trivial and  $\Omega(\C)=\{\hat{1}\}$. 
\end{enumerate}
\end{Theorem}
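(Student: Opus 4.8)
The plan is to prove the two implications separately, using the approximation theorem (Theorem \ref{MultiApprox}) together with Lemmas \ref{L1} and \ref{L2} for the direction $(ii)\Rightarrow(i)$, and a direct embedding argument for $(i)\Rightarrow(ii)$.

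For $(ii)\Rightarrow(i)$, suppose $h$ is an almost periodic infinitesimal flex of $\C$. By Theorem \ref{MultiApprox}, $h$ is the uniform limit of trigonometric multi-sequences $g_n(k,v_\kappa)=[h,R_kK_n^{(d)}]_{\bZ^d}$, and by Lemma \ref{L2} each $g_n$ is itself an infinitesimal flex. Writing $g_n=\sum_{\omega\in F_n}a_\omega\otimes e_\omega$ in finite-sum form, Lemma \ref{L1} tells us that each nonzero component $a_\omega\otimes e_\omega$ is a nonzero $\omega$-phase-periodic infinitesimal flex; hence $\omega\in\Omega(\C)$. Since $\Omega(\C)=\{\hat 1\}$ by hypothesis, the only frequency appearing in any $g_n$ is $\hat 1$, so every $g_n$ is strictly periodic, i.e. $g_n=b_n\otimes e_{\hat 1}$ for some $b_n\in\bC^{d|F_v|}$ with $\Phi_\C(\hat 1)b_n=0$ (using the Theorem relating $R(\C)u=0$ to $\Phi_\C(\overline\omega)b=0$). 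By the Corollary, the hypothesis that strictly periodic flexes are trivial means $\ker\Phi_\C(\hat 1)$ consists precisely of the trivial infinitesimal translations, so each $b_n$ gives a translational flex; passing to the uniform limit, $h$ itself is a (constant, hence trivial) infinitesimal translation. The one point needing care here is that ``trivial'' for a crystal framework means an infinitesimal isometry of $\bR^d$ (rotations as well as translations): I must check that a uniform limit of genuinely periodic trivial flexes can only be a translation, since a nonzero infinitesimal rotation is unbounded and so not almost periodic — this rules out rotational components and confirms the limit $h$ is trivial in the required sense.

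For $(i)\Rightarrow(ii)$ I argue contrapositively. First, if some strictly periodic flex is nontrivial, it is in particular a (bounded, periodic, hence) almost periodic nontrivial flex, so $(i)$ fails. Second, if $\Omega(\C)\neq\{\hat 1\}$, pick $\omega\in\Omega(\C)$ with $\omega\neq\hat 1$ and a nonzero $b\in\ker\Phi_\C(\overline\omega)$; then $u=b\otimes e_\omega$ is a nonzero $\omega$-phase-periodic infinitesimal flex. Its real and imaginary parts are real infinitesimal flexes, at least one of which is nonzero, and each is almost periodic because it is a trigonometric multi-sequence (a finite sum of pure frequencies $e_\omega, e_{\overline\omega}$ with bounded coefficients). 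It remains to check this almost periodic flex is nontrivial: since $\omega\neq\hat 1$, the components $u(v,k)=\omega^k b(v)$ are genuinely non-constant in $k$ while any trivial infinitesimal flex restricted to $p(V)$, being the restriction of an affine velocity field $x\mapsto Sx+t$ with $S$ skew, would have to be eventually unbounded unless $S=0$, and if $S=0$ it would be constant on $p(V)$ — contradicting the phase-periodicity with $\omega\neq\hat 1$ and $b\neq 0$. Hence $(i)$ fails.

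The main obstacle is the bookkeeping around the notion of ``trivial'' flex: trivial flexes of an infinite framework come from the full Euclidean isometry group, and only the translational ones are bounded, so both directions of the argument hinge on separating the bounded (translational) part of a trivial flex from its (unbounded, rotational) part and matching this against the phase-periodic/periodic structure. Once that is pinned down, the rest is a routine assembly of the approximation theorem and the two lemmas; I would also state explicitly, perhaps as a preliminary remark, that a trigonometric multi-sequence with bounded coefficients is automatically Bohr almost periodic, since this elementary fact is used in the contrapositive direction.
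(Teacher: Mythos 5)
Your proposal is correct and follows essentially the same route as the paper: Theorem \ref{MultiApprox} plus Lemmas \ref{L1} and \ref{L2} show the Bochner--Fej\'er approximants of an almost periodic flex are finite sums of phase-periodic flexes, so a trivial RUM spectrum forces them (and hence the uniform limit) to be strictly periodic, while the converse direction is just the observation that phase-periodic flexes are almost periodic. Your additional bookkeeping about trivial flexes (only the translational part of a rigid motion is bounded, so bounded trivial flexes are constant) is a sound way to make explicit what the paper leaves implicit in its final step and in the one-line implication $(i)\Rightarrow(ii)$.
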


\proof
$(i)\Rightarrow (ii)$
This follows since every phase periodic infinitesimal flex is also an almost periodic infinitesimal flex.

$(ii)\Rightarrow (i)$
Let $u$ be an almost periodic infinitesimal flex. Then by Theorem \ref{MultiApprox},
for $d$ dimensions, $u$ is a uniform limit of the sequence $(g_n)$ of trigonometric sequences in
$AP(\bZ^d\times F_v,\bC^d)$ given by
\[g_n(k,v_\kappa) = [h,R_k(K^{(d)}_n)]_{\bZ_d} ,\,\,\,\,\,\,\, k \in \bZ^d\]
where $K^{(d)}_n$, $n = 1, 2,\ldots$, is the sequence of Bochner-Fej\'{e}r kernels for $u$. By Lemma \ref{L2} 
each trigonometric sequence  $g_n$ is an infinitesimal flex of $\C$ and so by Lemma \ref{L1}  each $g_n$ is a finite linear
combination of phase-periodic infinitesimal 
flexes of $\C$. By hypothesis, the RUM spectrum
is trivial and so each $g_n$ is strictly periodic. It follows that $u$ is strictly periodic as
desired.
\endproof

The Bohr spectrum of an almost periodic infinitesimal flex $u$ of the crystal framework $\C$ 
is the finite or countable set given by
\[
\Lambda(u,C) = \{\lambda \in [0,1)^d: [u,e_\omega]_{\bZ^d} \neq 0, \mbox{ for } \omega=e^{2\pi i\lambda}\in \bT^d\}
\]
It follows from Theorem \ref{MultiApprox}, Lemma \ref{L1} and Lemma \ref{L2}, as in the proof above, that $\Lambda(u,C)$ is contained in the RUM spectrum of $\C$. Also, since phase-periodic flexes are almost periodic it follows that the RUM spectrum as a subset of $[0,1)^d$ is the union of the Bohr spectra of all almost periodic infinitesimal flexes.
Note that the spectra here depend on the translation group in the following manner. If $\C'$ has the same underlying bar-joint framework as $\C$ but full rank translation group $\T'\subseteq \T$ then the infinitesimal flex $u$ is represented anew as a sequence in $AP(\bZ^d\times F_v',\bC^d)$ where $F'_v$ is a vertex motif for
$\T'$. The Bohr spectrum $\Lambda(u,\C')$ is then the image of $\Lambda(u,\C)$ under the surjective map $\bT^d \to \bT^d$ induced by the inclusion $\T'\subseteq \T$. This follows the same relationship as that for the RUM spectrum noted in \cite{pow-poly}. It also follows from this that the dimension of $\Omega(\C)$, as a topological space or as an algebraic variety, is independent of the translation group and we refer to this integer, which takes values between $0$ and $d$, as the RUM dimension of $\C$.

We see in the next section that $\Omega(\C)$, as a subset of $[0,1)^d$, often decomposes as a union of linear components. This is the case for example in two dimensions if $\Phi_\C(z)$ is a square matrix function whose determinant polynomial $\det\Phi_\C(z)$ either vanishes identically or factorises into simple factors of the form $(z^n - \lambda w^m)$ with $|\lambda|=1$. It follows that each almost periodic flex $u$ of $\C$ admits a finite sum decomposition  $u_1+\dots +u_r$ in which each component  $u_i$ is an almost periodic flex whose Bohr spectrum lies in the $i^{th}$ linear component. Such component   flexes are partially periodic, being periodic in certain directions of translational symmetry.

\section{Gallery of crystal frameworks} 
\label{Examples}

We now exhibit a number of illustrative examples. The first two of these show two extreme cases, firstly where the RUM spectrum is a singleton, and secondly where the RUM spectrum is $\bT^d$.

\begin{Example}
Let $\C=(G,p)$ be the crystallographic bar-joint framework with motif $(F_v,F_e)$ and translation group $\T$ indicated in Table \ref{CFTriangle}. Simplifying earlier notation, the motif vertex is labelled $v$ and the motif edges are labelled 
$e_0=v(0,0)v(1,0)$, $e_1=v(1,0)v(0,1)$ and $e_2=v(0,0)v(0,1)$.
The translation group is $\T=\{k_1a_1+k_2a_2:k_1,k_2\in \bZ\}$ where 
$a_1=(1,0)$ and $a_2=(\frac{1}{2},\frac{\sqrt{3}}{2})$.
The symbol function for $\C$ is,
\[\Phi_\C(z,w) = \kbordermatrix{ & v_x & & v_y \\
e_0 & \bar{z}-1 & & 0 \\
 e_1 &\frac{1}{2}(\bar{z} - \bar{w}) & & \frac{\sqrt{3}}{2}(\bar{w} - \bar{z})\\
 e_2 & \frac{1}{2}(\bar{w} - 1) & & \frac{\sqrt{3}}{2}(\bar{w}-1)\\
}\]
Note that $\Phi_\C(z,w)$ has rank $2$ unless $z=w=1$ and so the RUM spectrum of 
$\C$ is the singleton $(1,1)\in\bT^2$. 
Also, there are no non-trivial strictly periodic infinitesimal flexes of $\C$ and so, by Theorem \ref{APThm}, $\C$ is almost periodically infinitesimally rigid.

In fact $\C$ is sequentially infinitesimally rigid in the sense that there exists an increasing chain of finite subgraphs $G_1\subset G_2\subset \cdots$ of $G$ such that every vertex of $G$ is contained in some $G_n$ and each subframework  $(G_n,p)$ is infinitesimally rigid. For example, for each $n$ take $G_n$ to be the vertex-induced subgraph on $\{v(k_1,k_2): (k_1,k_2)\in\bZ^2, \,|k_i|\leq n\}$.
It follows that $\C$ admits no nontrivial infinitesimal flexes and so is (absolutely) infinitesimally rigid as a bar-joint framework.
In \cite{kit-pow-1} we obtain a general characterisation of countable simple graphs $G$ whose locally generic placements are infinitesimally rigid in this sense. The condition is that $G$ should contain a vertex-complete chain of $(2,3)$-tight subgraphs. The crystal framework $\C$ may be viewed as a nongeneric placement of such a graph which remains infinitesimally rigid despite the crystallographic symmetry.
\end{Example}

\begin{table}[h]
 \caption{An  infinitesimally rigid crystal framework.}
  \begin{tabular}{ | c | c | c | }
  \hline
  Motif & Translation group & Crystal framework \\ 
  \hline
  \begin{minipage}{.3\textwidth}
  \begin{tikzpicture}[scale=1.6]
  \clip (-1.1,-0.5) rectangle (1.5cm,1.2cm);   
  
  \def\x{0.866};  

  \coordinate (A1) at (0,0);
  \coordinate (A2) at (1,0);
  \coordinate (A3) at (0.5,\x);
 
  \draw (A1) -- (A2) -- (A3) -- cycle;	

  \node[draw,circle,inner sep=2pt,fill] at (A1) {};
  \node[draw,circle,inner sep=2pt,fill=white] at (A2) {};
  \node[draw,circle,inner sep=2pt,fill=white] at (A3) {};

  \node[below left] at (A1) {\small $v$};
  \node[right] at (0.72,0.45) {\small $e_1$};
  \node[below] at (0.5,0) {\small $e_0$};
  \node[left] at (0.28,0.45) {\small $e_2$};
  \end{tikzpicture}
  \end{minipage}
  &
  \begin{minipage}{.3\textwidth}
  \begin{tikzpicture}[scale=1.6] 
  \clip (-0.9,-0.5) rectangle (1.8cm,1.2cm);
      
  \def\x{0.866};  

  \coordinate (A1) at (0,0);
  \coordinate (A2) at (1,0);
  \coordinate (A3) at (0.5,\x);
   
  \draw[line width=1.4pt,->,dashed] (A1) --  (A2);	
  \draw[line width=1.4pt,->,dashed] (A1) -- (A3); 
 
  \node[below] at (A2) {\small $(1,0)$};
  \node[right] at (A3) {\small $(\frac{1}{2},\frac{\sqrt{3}}{2})$};
  \end{tikzpicture}
  \end{minipage}
  & 
  \begin{minipage}{.3\textwidth}
  \begin{tikzpicture}[scale=0.7] 
  \clip (-2.8,-1.3) rectangle (4.4cm,3.8cm); 
  \foreach \b in {-2,-1,0,1,2,3,4}
  {	  
   \foreach \a in {-5, -4,-3, -2, -1, 0, 1, 2, 3, 4,5}
   { 
   \def\x{0.866};  

   \coordinate (A1) at (0+\a+0.5*\b,0+\b*\x);
   \coordinate (A2) at (1+\a+0.5*\b,\b*\x);
   \coordinate (A3) at (0.5+\a+0.5*\b,\x+\b*\x);
 
   \draw (A1) -- (A2) -- (A3) -- cycle;	
   
   \node[draw,circle,inner sep=1pt,fill] at (A1) {};
   \node[draw,circle,inner sep=1pt,fill] at (A2) {};
   \node[draw,circle,inner sep=1pt,fill] at (A3) {};  
   }
  }
  \end{tikzpicture}
  \end{minipage}
  \\ \hline
  \end{tabular}
   \label{CFTriangle}
\end{table}

A crystal framework $\C$ in $\bR^d$ is said to be in {\em Maxwell counting equilibrium} if 
$|F_e|=d|F_v|$. In this case the symbol function $\Phi_\C(z)$ is a square matrix and the determinant of $\Phi_\C(z)$ gives rise to the {\em crystal polynomial} $p_\C(z)$ (see \cite{pow-poly}).
A non-trivial infinitesimal flex $u$ of $(G,p)$ is said to {\em local} if $u(v)=0$ for all but finitely many vertices $v\in V(G)$. It is shown in \cite{owe-pow-crystal} that a crystal framework which is in Maxwell counting equilibrium has a local infinitesimal flex if and only if the crystal polynomial $p_\C(z)$ is identically zero.

\begin{Example}
Consider the crystal framework $\C$ with the motif and translation group shown in Table \ref{CFLocal}.
The motif vertices are $v_0=(0,0)$ and $v_1=(\frac{1}{2},\frac{1}{2})$.
The motif edges are $e_0=v_0(0,0)v_0(1,0)$,
$e_1=v_0(0,0)v_0(0,1)$,
$e_2=v_0(0,1)v_1(0,0)$ and
$e_3=v_0(1,0)v_1(0,0)$.
Note that $\C$ is in Maxwell counting equilibrium and has symbol function,
\[\Phi_\C(z,w)=\kbordermatrix{
 & v_{0,x} & v_{0,y} & & v_{1,x} & v_{1,y} \\
e_0 & \bar{z}-1 & 0 &\vrule & 0 & 0 \\
e_1 & 0 & \bar{w}-1 & \vrule & 0 & 0 \\
e_2 & -\frac{1}{2}\bar{w} & \frac{1}{2}\bar{w} & \vrule & \frac{1}{2} & -\frac{1}{2} \\
e_3 & \frac{1}{2}\bar{z} & -\frac{1}{2}\bar{z} & \vrule & -\frac{1}{2} & \frac{1}{2} 
}\]
The determinant of $\Phi_\C(z,w)$ vanishes identically and so the RUM spectrum of $\C$ is $\bT^2$.
A local infinitesimal  flex of $\C$ is evident by defining $u(v_1)=(1,1)$ and $u_v=0$ for all $v\not=v_1$. 
A phase-periodic infinitesimal flex of $\C$ for $\omega=(\omega_1,\omega_2)$ is obtained by taking 
$u(v_0,k)=0$ and $u(v_1,k)=\omega_1^{k_1}\omega_2^{k_2}(1,1)$ for each $k=(k_1,k_2)\in\bZ^2$. In particular, any finite linear combination of such phase-periodic flexes will be an almost periodic infinitesimal flex for $\C$.
\end{Example}

\begin{table}[ht]
  \centering
  \begin{tabular}{ | c | c | c | }

  \hline
  Motif & Translation group & Crystal framework \\ 
  \hline
    
  \begin{minipage}{.3\textwidth}
  \begin{tikzpicture}[scale=2] 
  \clip (-0.85,-0.2) rectangle (1.8cm,1.2cm); 
  
  \coordinate (A1) at (0,0);
  \coordinate (A2) at (1,0);
  \coordinate (A3) at (0.5,0.5);
  \coordinate (A4) at (0,1);

  \draw (A1) -- (A2) -- (A3) -- (A4) --  cycle;	

  \node[draw,circle,inner sep=2pt,fill] at (A1) {};
  \node[draw,circle,inner sep=2pt,fill=white] at (A2) {};
  \node[draw,circle,inner sep=2pt,fill] at (A3) {};
  \node[draw,circle,inner sep=2pt,fill=white] at (A4) {};

  \node[left] at (A1) {\small $v_0$};
  \node[right] at (A3) {\small $v_1$};
  \node[below] at (0.5,0) {\small $e_0$};
  \node[left] at (0,0.5) {\small $e_1$};
  \node[right] at (0.24,0.76) {\small $e_2$};
  \node[right] at (0.75,0.25) {\small $e_3$};

  \end{tikzpicture}
  \end{minipage}
   &
  \begin{minipage}{.3\textwidth}
  \begin{tikzpicture}[scale=2] 
  \clip (-0.8,-0.25) rectangle (2cm,1.2cm); 
  
  \coordinate (A1) at (0,0);
  \coordinate (A2) at (1,0);
  \coordinate (A4) at (0,1);

  \draw[line width=1.2pt,->,dashed] (A1) --  (A2);	
  \draw[line width=1.2pt,->,dashed] (A1) -- (A4);
  \node[left] at (A4) {\small $(0,1)$};
  \node[below] at (A2) {\small $(1,0)$};

  \end{tikzpicture}
  \end{minipage}
  & 
  \begin{minipage}{.3\textwidth}
  \vspace{3mm}
  \begin{tikzpicture}[scale=0.8] 
  \clip (-3.4,-1.7) rectangle (2.9cm,2.8cm); 
 
  \foreach \b in {-2,-1,0,1,2}
  {
	  
   \foreach \a in {-4,-3,-2,-1,0,1,2,3}
   {
   
   \coordinate (A1) at (0+\a,0+\b);
   \coordinate (A2) at (1+\a,\b);
   \coordinate (A3) at (0.5+\a,0.5+\b);
   \coordinate (A4) at (\a,1+\b);

   \draw (A1) --  (A2) -- (A3) -- (A4) -- cycle;	

   \node[draw,circle,inner sep=1pt,fill] at (A1) {};
   \node[draw,circle,inner sep=1pt,fill] at (A2) {};
   \node[draw,circle,inner sep=1pt,fill] at (A3) {};
   \node[draw,circle,inner sep=1pt,fill] at (A4) {};

   }
  }

  \end{tikzpicture}
  \end{minipage} \\ 
  \hline
  \end{tabular}
  \caption{A  crystal framework with full RUM spectrum.}
  \label{CFLocal}
\end{table}

A  velocity field $u$ is {\em supercell-periodic} for a crystal framework $\C$  if $u(v_{\kappa},0)=u(v_\kappa,k)$ for each motif vertex $v_\kappa$ and all $k$ in a
full rank subgroup of $\bZ^d$.

\begin{Example}
Let $\C$ be the crystallographic bar-joint framework with motif $(F_v,F_e)$ and translation group $\T$ indicated in Table \ref{AltGrid}. 
Note that $\C$ has symbol function,
\[\Phi_\C(z,w)=\kbordermatrix{
 & v_{0,x} & v_{0,y} & & v_{1,x} & v_{1,y} \\
e_0 & -1 & 0 &\vrule & \bar{z}w & 0 \\
e_1 & 0 & -1 & \vrule & 0 & 1 \\
e_2 & \bar{z}-1& \bar{z}-1 & \vrule & 0 & 0 \\
e_3 & \bar{z} & 0 & \vrule & -1 & 0\\
e_4 & 0 & \bar{w} &\vrule& 0 & -1
}\]
The RUM spectrum of $\C$ is $\Omega(\C)=\{(1,1),(-1,1)\}$. Note that $\C$ does not admit any non-trivial  infinitesimal flexes which are strictly periodic with respect to $\T$. However, $\C$ does admit non-trivial  supercell-periodic infinitesimal  flexes for the subgroup $2\bZ\times \bZ$.
\end{Example}

\begin{table}[ht]
  \centering
  \begin{tabular}{ | c | c | c | }
  \hline
  Motif & Translation group & Framework \\ 
  \hline
    
  \begin{minipage}{.3\textwidth}
  \begin{tikzpicture}[scale=1.2] 
 
  \clip (-1.4,-0.3) rectangle (3cm,2.3cm); 
  
  \coordinate (A1) at (0,0);
  \coordinate (A2) at (1,0);
  \coordinate (A3) at (0,1);
  \coordinate (A4) at (0,2);
  \coordinate (A5) at (1,1);

  \draw (A1) --  (A2);	
  \draw (A1) -- (A5) -- (A3);
  \draw (A1) --  (A3) -- (A4);	
    
  \node[draw,circle,inner sep=2.2pt,fill] at (A1) {};
  \node[draw,circle,inner sep=2.2pt,fill=white] at (A2) {};
  \node[draw,circle,inner sep=2.2pt,fill] at (A3) {};
  \node[draw,circle,inner sep=2.2pt,fill=white] at (A4) {};
  \node[draw,circle,inner sep=2.2pt,fill=white] at (A5) {};

  \node[below left] at (A1) {\small $v_0$};
  \node[left] at (A3) {\small $v_1$};
  \node[below] at (0.5,0) {\small $e_0$};
  \node[left] at (0,0.5) {\small $e_1$};
  \node[right] at (0.4,0.35) {\small $e_2$};
  \node[above] at (0.5,1) {\small $e_3$};
  \node[left] at (0,1.5) {\small $e_4$};
  
  \end{tikzpicture}
  \end{minipage}
  &
  \begin{minipage}{.3\textwidth}
  \begin{tikzpicture}[scale=1.2] 
 
  \clip (-1.4,-0.3) rectangle (3cm,2.3cm); 
  
  \coordinate (A1) at (0,0);
  \coordinate (A5) at (0,2);
  \coordinate (A6) at (1,1);

  \draw[line width=1.2pt,->,dashed] (A1) --  (A5);	
  \draw[line width=1.2pt,->,dashed] (A1) -- (A6);
  \node[right] at (A6) {\small $(1,1)$};
  \node[right] at (A5) {\small $(0,2)$};
  \end{tikzpicture}
  \end{minipage}  
  & 
  \begin{minipage}{.3\textwidth}
  \begin{tikzpicture}[scale=0.8] 
 
  \clip (-1.6,-1.4) rectangle (4.7cm,3.4cm); 
   
  \foreach \a in {-2,-1,0,1,2,3,4,5,6}
  {	  
   \foreach \b in {-6,-4,-2,0,2,4}
   {
   
   \coordinate (A1) at (0+\a,0+\b+\a);
   \coordinate (A2) at (0+\a,1+\b+\a);
   \coordinate (A3) at (1+\a,0+\b+\a);
   \coordinate (A4) at (1+\a,1+\b+\a);
   \coordinate (A5) at (0+\a,2+\b+\a);

   \draw (A1) -- (A3) -- (A4);	
   \draw (A1) -- (A2) -- (A5);
   \draw (A1) -- (A4);
   \draw (A2) -- (A4);

   \node[draw,circle,inner sep=1.1pt,fill] at (A1) {};
   \node[draw,circle,inner sep=1.1pt,fill] at (A2) {};
   \node[draw,circle,inner sep=1.1pt,fill] at (A3) {};
   \node[draw,circle,inner sep=1.1pt,fill] at (A4) {};
   \node[draw,circle,inner sep=1.1pt,fill] at (A5) {};
    
   }
  }
 
  \end{tikzpicture}
  \end{minipage}
    \\ \hline
  \end{tabular}
  \caption{A crystal framework with RUM spectrum $\Omega(\C)=\{(1,1),(-1,1)\}$.}
\label{AltGrid}
\end{table}

\begin{Example}
Let $\C$ be the crystallographic bar-joint framework in $\bR^2$ which is indicated in Table \ref{DoubleGrid}.
The symbol function $\Phi_\C(z,w)$ is,
\[\Phi_\C(z,w)=\kbordermatrix{
 & v_{0,x} & v_{0,y} & & v_{1,x} & v_{1,y} \\
e_0 & -\frac{1}{2} & \frac{1}{2} &\vrule & \frac{1}{2} & -\frac{1}{2} \\
e_1 & \bar{z}-1& 0 & \vrule & 0 & 0\\
e_2 & 0 & \bar{w}-1 &\vrule& 0 & 0 \\
e_3 & \bar{z}-\bar{w}&\bar{w}-\bar{z} & \vrule &0 &0 \\
e_4 & 0 & 0 & \vrule & \bar{z}-1 & 0 \\
e_5 & 0 & 0 & \vrule & 0 & \bar{w}-1 \\
e_6 &0 &0  & \vrule&\bar{z}-\bar{w} &\bar{w}-\bar{z} 
}\]
The RUM spectrum $\Omega(\C)$  is the singleton $(1,1)\in\bT^2$.
However, every strictly periodic velocity field $u$ with $u(v_0)\not=u(v_1)$ is a non-trivial strictly periodic infinitesimal flex of $\C$.  
\end{Example}

\begin{table}[ht]
  \centering
  \begin{tabular}{ | c | c | c | }
  \hline
  Motif & Translation group & Crystal framework \\ 
  \hline
  \begin{minipage}{.3\textwidth}
  \begin{tikzpicture}[scale=1.9] 
 
  \clip (-0.8,-1) rectangle (2.2cm,1.4cm); 
  
  \coordinate (A1) at (0,0);
  \coordinate (A2) at (1,0);
  \coordinate (A3) at (0,1);
  \coordinate (A4) at (0.35,-0.65);
  \coordinate (A5) at (1.35,-0.65);
  \coordinate (A6) at (0.35,0.35);
   
  \draw (A1) -- (A2);
  \draw (A1) -- (A3);	
  \draw (A1) -- (A4);	
  \draw (A4) -- (A5);
  \draw (A4) -- (A6);	
  \draw (A3) -- (A2);	
  \draw (A5) -- (A6);	
   
  \node[draw,circle,inner sep=2.2pt,fill] at (A1) {};
  \node[draw,circle,inner sep=2.2pt,fill=white] at (A2) {};
  \node[draw,circle,inner sep=2.2pt,fill=white] at (A3) {};
  \node[draw,circle,inner sep=2.2pt,fill] at (A4) {};
  \node[draw,circle,inner sep=2.2pt,fill=white] at (A5) {};
  \node[draw,circle,inner sep=2.2pt,fill=white] at (A6) {};

  \node[left] at (A1) {\small $v_0$};
  \node[left] at (A4) {\small $v_1$};
  \node[left] at (0.2,-0.35) {\small $e_0$};
 
  \end{tikzpicture}
  \end{minipage}
  &
  \begin{minipage}{.3\textwidth}
  \begin{tikzpicture}[scale=1.9] 
  \clip (-0.7,-1) rectangle (2cm,1.4cm); 
  
  \coordinate (A1) at (0,0);
  \coordinate (A2) at (1,0);
  \coordinate (A3) at (0,1);
 
  \draw[line width=1.2,->,dashed] (A1) --  (A2);	
  \draw[line width =1.2,->,dashed] (A1) -- (A3);

  \node[below] at (A2) {\small $(1,0)$};
  \node[below right] at (A3) {\small $(0,1)$};
  \end{tikzpicture}
  \end{minipage}
  & 
  \begin{minipage}{.3\textwidth}
  \begin{tikzpicture}[scale=1.9] 
  \clip (0.6,-0.2) rectangle (3.2cm,2.2cm); 
   
  \foreach \b in {-4,-3,-2,-1,0,1,2,3,4}
  {
  \foreach \a in {-4,-3,-2,-1,0,1,2,3,4,5,6}
   {
   \coordinate (Aone) at (0+\a,0+\b);
   \coordinate (Athree) at (1+\a,0+\b);
   \coordinate (Afive) at (0+\a,1+\b);

   \draw (Aone) --  (Athree);	
   \draw (Aone) -- (Afive);
   \draw (Aone) -- (0.35+\a,-0.65+\b);
   \draw (Athree) -- (Afive);	

   \node[draw,circle,inner sep=1.4pt,fill] at (Aone) {};
   \node[draw,circle,inner sep=1.4pt,fill] at (Athree) {};
   \node[draw,circle,inner sep=1.4pt,fill] at (Afive) {};
   }
  }
 
  \foreach \b in {-4,-3,-2,-1,0,1,2,3,4}
  {
   \foreach \a in {-4,-3,-2,-1,0,1,2,3,4,5,6}
   {
   \coordinate (Aone) at (0.35+\a,-0.65+\b);
   \coordinate (Athree) at (1.35+\a,-0.65+\b);
   \coordinate (Afive) at (0.35+\a,0.35+\b);
 
   \draw (Aone) --  (Athree);	
   \draw (Aone) -- (Afive);
   \draw (Athree) -- (Afive);	
 
   \node[draw,circle,inner sep=1.4pt,fill] at (Aone) {};
   \node[draw,circle,inner sep=1.4pt,fill] at (Athree) {};
   \node[draw,circle,inner sep=1.4pt,fill] at (Afive) {};
   }
  }
  \end{tikzpicture}
  \end{minipage}
  \\ \hline
  \end{tabular}
\caption{A crystal framework with $\Omega(\C)=\{(1,1)\}$ and nontrivial strictly periodic flexes.}
\label{DoubleGrid}
\end{table}

As we have noted, the {\em RUM dimension} of a crystal framework  $\C$ is the dimension of the RUM spectrum $\Omega(\C)$ as a real algebraic variety (see \cite{pow-poly}).

\begin{Example}
The crystal framework $\C$ illustrated in Table \ref{CFRhombic} is in Maxwell counting equilibrium.
The motif vertices are $v_0 = (0,0)$, $v_1=(\frac{1}{2},\frac{\sqrt{3}}{2})$ and $v_2=(0,1)$. 
The  symbol function is, 
\[\Phi(z,w)=\kbordermatrix{
 & v_{0,x} & v_{0,y} &  &  v_{1,x} & v_{1,y} & & v_{2,x} & v_{2,y} \\
e_0 & -\frac{1}{2} & -\frac{\sqrt{3}}{2} & \vrule  & \frac{1}{2} & \frac{\sqrt{3}}{2} & \vrule & 0 & 0 \\
e_1 & 0 & -1 & \vrule & 0 & 0 & \vrule & 0 & 1 \\
 e_2 & \frac{1}{2}\bar{w} & \frac{\sqrt{3}}{2}\bar{w} & \vrule & 0 & 0 & \vrule & -\frac{1}{2} & -\frac{\sqrt{3}}{2} \\
e_3 & 0 & \bar{w} & \vrule & 0 & -1 & \vrule & 0 & 0 \\
e_4 & -\frac{1}{2} &\frac{\sqrt{3}}{2} & \vrule & 0 & 0 & \vrule  & \frac{1}{2}w\bar{z} &- \frac{\sqrt{3}}{2}w\bar{z} \\
e_5 & \frac{1}{2}\bar{z} & -\frac{\sqrt{3}}{2}\bar{z} & \vrule & -\frac{1}{2} & \frac{\sqrt{3}}{2} & \vrule & 0 & 0 
}
\]

The crystal polynomial factors in to linear parts,
\[p_\C(z,w)=(z-1)(w-1)(z-w)\] 
and so the RUM spectrum is a proper subset of $\bT^2$ whose representation in $[0,1)^2$ consists of the points (s,t) in the line segments given by
\[
s=0,\,\,\,\,\,\,\,   t=0,\,\,\,\,\,\,\,  s=t
\]
In particular, $\C$ is almost periodically infinitesimally flexible but has no local infinitesimal flexes.
Also, the RUM dimension of $\C$ is $1$.
It follows also that every almost periodic infinitesimal flex decomposes 
as a sum $u_1+u_2+u_3$ of three almost periodic flexes corresponding to
this ordered decomposition. Furthermore,  $u_1$, with Bohr spectrum in 
the line $s=0$, is periodic in the direction of the period vector 
$a_1=(1,0)$, while $u_2$, with Bohr spectrum in the line $t=0$, is 
periodic in the direction of the  period vector 
$a_2=(1/2,(2+\sqrt{3})/2)$, and $u_3$ with Bohr spectrum in the line 
$s=t$ is periodic in the direction $a_1-a_2$.
\end{Example}

\begin{table}[h]
  \centering
  \begin{tabular}{ | c | c | c | }
  \hline
  Motif & Translation group & Crystal framework \\ 
  \hline
  
  \begin{minipage}{.3\textwidth}
  \begin{tikzpicture}[scale=1.3]
 
  \clip (-1.6,-1.2) rectangle (1.8cm,2.2cm);   
  
  \def\x{0.866};  

  \coordinate (A1) at (0,0);
  \coordinate (A2) at (0.5,\x);
  \coordinate (A3) at (0,1);
  \coordinate (A4) at (0.5,1+\x);
  \coordinate (A5) at (0.5,-\x);
  \coordinate (A6) at (1,0);

  \draw (A1) -- (A2) -- (A4) -- (A3) -- cycle;	
  \draw (A1) -- (A5);
  \draw (A2) -- (A6);

  \node[draw,circle,inner sep=2pt,fill] at (A1) {};
  \node[draw,circle,inner sep=2pt,fill] at (A2) {};
  \node[draw,circle,inner sep=2pt,fill] at (A3) {};
  \node[draw,circle,inner sep=2pt,fill=white] at (A4) {};
  \node[draw,circle,inner sep=2pt,fill=white] at (A5) {};
  \node[draw,circle,inner sep=2pt,fill=white] at (A6) {};

  \node[below left] at (A1) {\small $v_0$};
  \node[right] at (A2) {\small $v_1$};
  \node[left] at (A3) {\small $v_2$};
  \node[below right] at (0.17,0.5*\x+0.06) {\small $e_0$};
  \node[left] at (0.08,0.5) {\small $e_1$};
  \node[above left] at (0.3,1.25) {\small $e_2$};
  \node[right] at (0.45,0.5+\x) {\small $e_3$}; 
  \node[right] at (0.2,-0.5*\x) {\small $e_4$};
  \node[right] at (0.7,0.5*\x) {\small $e_5$}; 
  \end{tikzpicture}

  \end{minipage}
  &
  \begin{minipage}{.3\textwidth}
  \begin{tikzpicture}[scale=1.3] 
 
  \clip (-1.2,-1.2) rectangle (2.2cm,2.2cm);
      
  \def\x{0.866};  

  \coordinate (A1) at (0,0);
  \coordinate (A2) at (1,0);
  \coordinate (A3) at (0.5,1+\x);
   
  \draw[line width=1.2pt,->,dashed] (A1) --  (A2);	
  \draw[line width=1.2pt,->,dashed] (A1) -- (A3); 

  \node[below] at (A2) {\small $(1,0)$};
  \node[below right] at (A3) {\small $(\frac{1}{2},\frac{2+\sqrt{3}}{2})$};
  
  \end{tikzpicture}
  \end{minipage}
  & 
  \begin{minipage}{.3\textwidth}
  \begin{tikzpicture}[scale=0.7] 
 
  \clip (-3.3,-2.2) rectangle (3.9cm,4.1cm); 
  
  \foreach \b in {-3,-2,-1,0,1,2,3}
  {	  
   \foreach \a in {-4, -3, -2, -1, 0, 1, 2, 3, 4, 5}
   {
   \def\x{0.866};  
 
   \coordinate (A1) at (0+\a+1/2*\b,0+\b*1.866);
   \coordinate (A2) at (0.5+\a+1/2*\b,\x+\b*1.866);
   \coordinate (A3) at (0+\a+1/2*\b,1+\b*1.866);
   \coordinate (A4) at (0.5+\a+1/2*\b,1+\x+\b*1.866);
   \coordinate (A5) at (0.5+\a+1/2*\b,-\x+\b*1.866);
   \coordinate (A6) at (1+\a+1/2*\b,0+\b*1.866);
 
   \draw (A1) -- (A2) -- (A4) -- (A3) -- cycle;	
   \draw (A1) -- (A5);
   \draw (A2) -- (A6);
   
   \node[draw,circle,inner sep=1pt,fill] at (A1) {};
   \node[draw,circle,inner sep=1pt,fill] at (A2) {};
   \node[draw,circle,inner sep=1pt,fill] at (A3) {};
   \node[draw,circle,inner sep=1pt,fill] at (A4) {};
   \node[draw,circle,inner sep=1pt,fill] at (A5) {};
   \node[draw,circle,inner sep=1pt,fill] at (A6) {}; 
   }
  }
  \end{tikzpicture}
  \end{minipage}
  \\ \hline
  \end{tabular}
 \caption{A crystal framework with RUM dimension $1$.}
\label{CFRhombic}
\end{table}

\begin{Example} Let $\C$ be the crystal framework illustrated in Table \ref{CFStar}.
The symbol function $\Phi(z,w)$ is the square matrix,
\[ \kbordermatrix{ & v_{1,x} & v_{1,y} & v_{2,x} & v_{2,y} & v_{3,x} & v_{3,y} & v_{4,x} & v_{4,y} & v_{5,x} & v_{5,y} & v_{6,x} & v_{6,y} \\
e_1 & 0 & 0  & 0 & 0 &0 &0 &\bar{z} & \frac{1}{2}\bar{z} &0 &0 &-1 &- \frac{1}{2}\\
e_2 & - \frac{1}{2} & -1 &  0 & 0 & \frac{1}{2} &1 &0 &0 &0 &0 &0 &0 \\
e_3 & 0 & 0 & \frac{1}{2} & 0 & 0 &0 & -\frac{1}{2} &0 &0 &0 &0 &0 \\
e_4 & 0 & 0 & 0 & 0 & - \frac{1}{2} &0 & \frac{1}{2}\bar{z}  &0 &0 &0 &0 &0 \\
e_5 & 0 & 0 & 0 & 0 & 0 &0 &-1 & \frac{1}{2} &0 & 0 &1 &-\frac{1}{2} \\
e_6 & 0 &  \frac{1}{2}\bar{w} & 0 & 0 & 0 &0 &0 &0 &0 & -\frac{1}{2} &0 &0 \\
e_7 &  0 & 0 & 0& 0 & 0 &0 &-1 & -\frac{1}{2} &1 & \frac{1}{2} &0 &0 \\
e_8 & 0 & 0 & 0 & 0 & 0 &0 &\bar{z}  &- \frac{1}{2}\bar{z}  &-1 & \frac{1}{2} &0 &0 \\
e_9 &  \frac{1}{2} \bar{w}& \bar{w} &  -\frac{1}{2} & -1 & 0 &0 &0 &0 &0 &0 &0 &0 \\
e_{10} & - \frac{1}{2}\bar{w} & \bar{w} & 0 & 0 & \frac{1}{2} &-1 &0 &0 &0 &0 &0 &0 \\
e_{11} & 0 &-  \frac{1}{2} & 0 & 0 & 0 &0 &0 &0 &0 &0 &0 & \frac{1}{2}\\
e_{12} &  \frac{1}{2} & -1 & - \frac{1}{2} & 1 & 0 &0 &0 &0 &0 &0 &0 &0 }
\]
The crystal polynomial is
\[p_{\mathcal{C}}(z,w) = (z-1)(z+1)(w-1)(w+1)\]
As in the last example the RUM spectrum decomposes as a union of linear subsets. In the $[0,1)^2$ representation it yields two horizontal and two vertical lines. From this it follows that any almost periodic infinitesimal flex decomposes as a sum of two flexes each of which is supercell periodic in one of the axial directions.
\end{Example}

\begin{table}[ht]
  \centering
  \begin{tabular}{ | c | c | c | }
  \hline
  Motif & Translation group & Framework \\ 
  \hline
  \begin{minipage}{.3\textwidth}
  \begin{tikzpicture}[scale=1.5] 
 
  \clip (-1.7,-1.4) rectangle (1.2cm,1.4cm); 
  
  \coordinate (A1) at (0.5,0);
  \coordinate (A2) at (1,0);
  \coordinate (A3) at (0,0.5);
  \coordinate (A4) at (0,1);
  \coordinate (A5) at (-0.5,0);
  \coordinate (A6) at (-1,0);
  \coordinate (A7) at (0,-0.5);
  \coordinate (A8) at (0,-1);
  
  \draw (A1) -- (A2) -- (A3) -- (A4) -- (A1);	
  \draw (A3) -- (A6) -- (A5);
  \draw (A4) -- (A5) -- (A8) -- (A7);
  \draw (A7) -- (A6);
  \draw (A8) -- (A1);
  \draw (A7) -- (A2);
 
  \node[draw,circle,inner sep=2.2pt,fill] at (A1) {};
  \node[draw,circle,inner sep=2.2pt,fill=white] at (A2) {};
  \node[draw,circle,inner sep=2.2pt,fill] at (A3) {};
  \node[draw,circle,inner sep=2.2pt,fill=white] at (A4) {};
  \node[draw,circle,inner sep=2.2pt,fill] at (A5) {};
  \node[draw,circle,inner sep=2.2pt,fill] at (A6) {}; 
  \node[draw,circle,inner sep=2.2pt,fill] at (A7) {};
  \node[draw,circle,inner sep=2.2pt,fill] at (A8) {}; 

  \end{tikzpicture}
  \end{minipage}
 
    &
 
  \begin{minipage}{.3\textwidth}
  \begin{tikzpicture}[scale=1.5] 
 
  \clip (-1.8,-1.5) rectangle (1.2cm,1.5cm); 
  
  \coordinate (A1) at (-1,-1);
  \coordinate (A2) at (1,-1);
  \coordinate (A3) at (-1,1);
 
 \draw[line width=1pt,->,dashed] (A1) --  (A3);	
 \draw[line width=1pt,->,dashed] (A1) -- (A2);
 
  \end{tikzpicture}
  \end{minipage}
    
    & 
 
  \begin{minipage}{.3\textwidth}
   \begin{tikzpicture}[scale=0.6] 
 
  \clip (-2,-2.6) rectangle (6.4cm,5.2cm); 
    
  \foreach \b in {-4,-2,0,2,4,6}
  {
	  
  \foreach \a in {-4,-2,0,2,4,6}
  {
  \coordinate (A2) at (0.5+\a,0+\b);
  \coordinate (A3) at (1+\a,0+\b);
  \coordinate (A4) at (0+\a,0.5+\b);
  \coordinate (A5) at (0+\a,1+\b);
  \coordinate (A6) at (-0.5+\a,0+\b);
  \coordinate (A7) at (-1+\a,0+\b);
  \coordinate (A8) at (0+\a,-0.5+\b);
  \coordinate (A9) at (0+\a,-1+\b);
  
  \draw (A2) -- (A3) -- (A4) -- (A5) -- (A2);	
  \draw (A4) -- (A7) -- (A6);
  \draw (A5) -- (A6) -- (A9) -- (A8);
  \draw (A8) -- (A7);
  \draw (A9) -- (A2);
  \draw (A8) -- (A3);

  \node[draw,circle,inner sep=1pt,fill] at (A2) {};
  \node[draw,circle,inner sep=1pt,fill] at (A3) {};
  \node[draw,circle,inner sep=1pt,fill] at (A4) {};
  \node[draw,circle,inner sep=1pt,fill] at (A5) {};
  \node[draw,circle,inner sep=1pt,fill] at (A6) {};
  \node[draw,circle,inner sep=1pt,fill] at (A7) {};  
  \node[draw,circle,inner sep=1pt,fill] at (A8) {};
  \node[draw,circle,inner sep=1pt,fill] at (A9) {};  
    }
 }

  \end{tikzpicture}
  \end{minipage}
    \\ \hline
  \end{tabular}
  \caption{A crystal framework with supercell periodic flexes.}
\label{CFStar}
\end{table}

The following two examples, shown in Tables 7 and 8, have the same underlying graph and the same crystallographic point group,  the dihedral group $\C_{2v}$. However the former has RUM dimension $1$ and is linearly indecomposable while the latter  has RUM dimension zero. Note that $4$-regular crystal frameworks such as these have square symbol function.

\begin{Example}
\label{Hex}
Let $\C$ be the crystal framework illustrated in Table \ref{CFHex}.
\[ \left[{\small \begin {array}{cccccccccccccccccc} -1&0&1&0&0&0&0&0&0&0&0&0&0&0
&0&0&0&0\\ \noalign{\medskip}0&0&-\frac{1}{2}&-\frac{
\sqrt {3}}{2}&\frac{1}{2}&\frac{
\sqrt {3}}{2}&0&0&0&0&0&0&0&0&0&0&0&0\\ \noalign{\medskip}0&0&0&0&\frac{1}{2}&-\frac{
\sqrt {3}}{2}&-\frac{1}{2}&\frac{
\sqrt {3}}{2}&0&0&0&0&0&0&0&0&0&0\\ \noalign{\medskip}0
&0&0&0&0&0&1&0&-1&0&0&0&0&0&0&0&0&0\\ \noalign{\medskip}0&0&0&0&0&0&0&0
&\frac{1}{2}&\frac{
\sqrt {3}}{2}&-\frac{1}{2}&-\frac{
\sqrt {3}}{2}&0&0&0&0&0&0
\\ \noalign{\medskip}\frac{1}{2}&-\frac{
\sqrt {3}}{2}&0&0&0&0&0&0&0&0&-\frac{1}{2}&\frac{
\sqrt {3}}{2}&0&0&0&0&0&0\\ \noalign{\medskip}-\frac{1}{2}&\frac{
\sqrt {3}}{2}&0&0&0&0&0
&0&0&0&0&0&\frac{1}{2}&-\frac{
\sqrt {3}}{2}&0&0&0&0\\ \noalign{\medskip}0&0&\frac{1}{2}&\frac{
\sqrt {3}}{2}&0&0&0&0&0&0&0&0&-\frac{1}{2}&-\frac{
\sqrt {3}}{2}&0&0&0&0
\\ \noalign{\medskip}0&0&-1&0&0&0&0&0&0&0&0&0&0&0&\bar{z}&0&0&0
\\ \noalign{\medskip}0&0&0&0&-\frac{1}{2}&\frac{
\sqrt {3}}{2}&0&0&0&0&0&0&0&0&\frac{\bar{z}}{2}&-\frac {\sqrt {3}\bar{z}}{2}&0&0\\ \noalign{\medskip}0&0&0&0&
-\frac{1}{2}&-\frac{
\sqrt {3}}{2}&0&0&0&0&0&0&0&0&0&0&\frac{\bar{z}}{2}&\frac {
\sqrt {3}\bar{z}}{2}\\ \noalign{\medskip}0&0&0&0&0&0&-1&0&0&0&0&0&0&0&0&0&\bar{z}&0\\ \noalign{\medskip}0&0&0&0&0&0&\frac{1}{2}&-\frac{
\sqrt {3}}{2}&0&0&0&0&-
\frac{\bar{w}}{2}&\frac{\sqrt {3}\bar{w}}{2}&0&0&0&0\\ \noalign{\medskip}0
&0&0&0&0&0&0&0&-\frac{1}{2}&-\frac{
\sqrt {3}}{2}&0&0&\frac{\bar{w}}{2}&\frac {
\sqrt {3}\bar{w}}{2}&0&0&0&0\\ \noalign{\medskip}0&0&0&0&0&0&0&0&1&0&0&0&0&0
&0&0&-1&0\\ \noalign{\medskip}0&0&0&0&0&0&0&0&0&0&\frac{1}{2}&-\frac{
\sqrt {3}}{2}&0
&0&0&0&-\frac{1}{2}&\frac{
\sqrt {3}}{2}\\ \noalign{\medskip}0&0&0&0&0&0&0&0&0&0&\frac{1}{2}
&\frac{
\sqrt {3}}{2}&0&0&-\frac{1}{2}&-\frac{
\sqrt {3}}{2}&0&0\\ \noalign{\medskip}1&0&0
&0&0&0&0&0&0&0&0&0&0&0&-1&0&0&0\end {array}} \right] 
\]
The crystal polynomial factorizes in to linear factors,
\[p_{\mathcal{C}}(z,w)  = (z+1)(z-1)^3 \]
\end{Example}

\begin{table}[h]
  \centering
  \begin{tabular}{ | c | c | c | }
  \hline
  Motif & Translation group & Framework \\ 
  \hline
  \begin{minipage}{.3\textwidth}
  \begin{tikzpicture}[scale=1]
 
  \clip (-2.5,-2.2) rectangle (2cm,2.2cm); 
  
  \def\a{0.5};
  \def\b{0.866};
  \def\c{0.866};
  \def\d{0.5};

  \coordinate (A1) at (-0.5,-\b);
  \coordinate (A2) at (0.5,-\b);
  \coordinate (A3) at (0.5+\a,0);
  \coordinate (A4) at (0.5,\b);
  \coordinate (A5) at (-0.5,\b);
  \coordinate (A6) at (-0.5-\a,0);
  \coordinate (A7) at (0,-\b-\c);
  \coordinate (A8) at (-0.5-\a-\d,-\c);
  \coordinate (A9) at (-0.5-\a-\d,\c);
  \coordinate (A10) at (0.5+\a+\d,-\c);
  \coordinate (A11) at (0.5+\a+\d,\c);
  \coordinate (A12) at (0,\b+\c);

  \draw (A1) -- (A2) -- (A3) -- (A4) --(A5)--(A6)-- cycle;
  \draw (A1) -- (A7) -- (A2) -- (A10) -- (A3) -- (A11) -- (A4) -- (A12) -- (A5) -- (A9) -- (A6)
	-- (A8)  -- cycle;

  \draw[very thin] (-0.06-\a,0) arc (0:75:.35);

  \node[right] at (-0.6,0.4)  {\small $\frac{\pi}{3}$};	
 
 \draw[dashed] (A6) --  (-0.15-\a,0);

  \node[draw,circle,inner sep=2.2pt,fill] at (A1) {};
  \node[draw,circle,inner sep=2.2pt,fill] at (A2) {};
  \node[draw,circle,inner sep=2.2pt,fill] at (A3) {};
  \node[draw,circle,inner sep=2.2pt,fill] at (A4) {};
  \node[draw,circle,inner sep=2.2pt,fill] at (A5) {};
  \node[draw,circle,inner sep=2.2pt,fill] at (A6) {};
  \node[draw,circle,inner sep=2.2pt,fill] at (A7) {};
  \node[draw,circle,inner sep=2.2pt,fill] at (A8) {};
  \node[draw,circle,inner sep=2.2pt,fill] at (A9) {};
  \node[draw,circle,inner sep=2.2pt,fill=white] at (A10) {}; 
  \node[draw,circle,inner sep=2.2pt,fill=white] at (A11) {};
  \node[draw,circle,inner sep=2.2pt,fill=white] at (A12) {};

  \end{tikzpicture}
  \end{minipage}
  &
  \begin{minipage}{.3\textwidth}
  \begin{tikzpicture}[scale=1] 
 
  \clip (-2.5,-2.2) rectangle (2.2cm,2.2cm); 
  
  \def\a{0.5};
  \def\b{0.866};
  \def\c{0.866};
  \def\d{0.5};

  \draw[line width=1.4, ->,dashed] (-0.5-\a-\d, -\b-\c) -- (0.5+\a+\d, -\b-\c);	
  \draw[line width=1.4, ->,dashed] (-0.5-\a-\d, -\b-\c) -- (-0.5-\a-\d, \b+\c);
 
  \end{tikzpicture}
  \end{minipage}
  & 
  \begin{minipage}{.3\textwidth}
  \begin{tikzpicture}[scale=0.3] 
 
  \clip (-6.8,-4.8) rectangle (9.6cm,8.5cm);    
  
  \def\a{0.5};
  \def\b{0.866};
  \def\c{0.866};
  \def\d{0.5};
  
  \foreach \x in {-9, -6, -3, 0, 3, 6, 9}
  {
   \foreach \y in {-10.392, -6.9282, -3.4641, 0, 3.4641, 6.9282, 10.392}
   {
   \coordinate (A1) at (-0.5+\x,-\b+\y);
   \coordinate (A2) at (0.5+\x,-\b+\y);
   \coordinate (A3) at (0.5+\a+\x,0+\y);
   \coordinate (A4) at (0.5+\x,\b+\y);
   \coordinate (A5) at (-0.5+\x,\b+\y);
   \coordinate (A6) at (-0.5-\a+\x,0+\y);
   \coordinate (A7) at (0+\x,-\b-\c+\y);
   \coordinate (A8) at (-0.5-\a-\d+\x,-\c+\y);
   \coordinate (A9) at (-0.5-\a-\d+\x,\c+\y);
   \coordinate (A10) at (0.5+\a+\d+\x,-\c+\y);
   \coordinate (A11) at (0.5+\a+\d+\x,\c+\y);
   \coordinate (A12) at (0+\x,\b+\c+\y);

   \draw (A1) -- (A2) -- (A3) -- (A4) --(A5)--(A6)-- cycle;
   \draw (A1) -- (A7) -- (A2) -- (A10) -- (A3) -- (A11) -- (A4) -- (A12) -- (A5) -- (A9) -- (A6)
	-- (A8)  -- cycle;
	
   \node[draw,circle,inner sep=1pt,fill] at (A1) {};
   \node[draw,circle,inner sep=1pt,fill] at (A2) {};
   \node[draw,circle,inner sep=1pt,fill] at (A3) {};
   \node[draw,circle,inner sep=1pt,fill] at (A4) {};
   \node[draw,circle,inner sep=1pt,fill] at (A5) {};
   \node[draw,circle,inner sep=1pt,fill] at (A6) {};
   \node[draw,circle,inner sep=1pt,fill] at (A7) {};
   \node[draw,circle,inner sep=1pt,fill] at (A8) {};
   \node[draw,circle,inner sep=1pt,fill] at (A9) {};
   \node[draw,circle,inner sep=1pt,fill] at (A10) {};
   \node[draw,circle,inner sep=1pt,fill] at (A11) {};
   \node[draw,circle,inner sep=1pt,fill] at (A12) {};
   }
  }
  \end{tikzpicture}
  \end{minipage} 
  \\ \hline
  \end{tabular}
  \caption{A crystal framework with linearly decomposable RUM spectrum.}
  \label{CFHex}
\end{table}

\begin{Example}
\label{NonRegHex}
Let $\C$ be the crystal framework illustrated in Table \ref{CFNonRegHex}.
This framework is in Maxwell counting equilibrium and the  symbol function $\Phi_\C(z,w)$ is, 
\[{\small 
\left[ \begin {array}{cccccccccccccccccc} -1&0&1&0&0&0&0&0&0&0&0&0&0&0
&0&0&0&0\\ \noalign{\medskip}0&0&-b&-a&b&a&0&0&0&0&0&0&0&0&0&0&0&0\\ \noalign{\medskip}0&0&0&0&b&-a&-b&a&0&0&0&0&0&0&0&0&0&0\\ \noalign{\medskip}0&0&0&0&0
&0&1&0&-1&0&0&0&0&0&0&0&0&0\\ \noalign{\medskip}0&0&0&0&0&0&0&0&b&a&-b&-a&0&0&0&0&0&0\\ \noalign{\medskip}
b&-a&0&0&0&0&0&0&0&0&-b&a&0&0&0&0&0&0
\\ \noalign{\medskip}-\frac{1}{2}&\frac{\sqrt {3}}{2}&0&0&0&0&0&0&0&0&0&0&\frac{1}{2}&-\frac{\sqrt {3}}{2}&0&0&0&0\\ \noalign{\medskip}0&0&\frac{1}{2}&\frac{\sqrt {3}}{2}&0&0&0&0
&0&0&0&0&-\frac{1}{2}&-\frac{\sqrt {3}}{2}&0&0&0&0\\ \noalign{\medskip}0&0&-a&b&0&0&0&0&0&0&0&0&0&0&a\bar{z}&
-b\bar{z}&0&0\\ \noalign{\medskip}0&0&0&0&-\frac{1}{\sqrt {2}}&\frac{1}{\sqrt {2}}&0&0&0
&0&0&0&0&0&\frac{\bar{z}}{\sqrt{2}}&-\frac{\bar{z}}{\sqrt{2}}&0&0
\\ \noalign{\medskip}0&0&0&0&-\frac{1}{\sqrt {2}}&-\frac{1}{\sqrt {2}}&0&0&0&0&0
&0&0&0&0&0&\frac{\bar{z}}{\sqrt{2}}&\frac{\bar{z}}{\sqrt{2}}
\\ \noalign{\medskip}0&0&0&0&0&0&-a&b&0&0&0&0&0&0&0&0&a\bar{z}&-b\bar{z}\\ \noalign{\medskip}0&0&0&0&0&0
&\frac{1}{2}&-\frac{\sqrt {3}}{2}&0&0&0&0&-\frac{\bar{w}}{2}&\frac {\sqrt {3}\bar{w}}{2}&0&0&0&0\\ \noalign{\medskip}0&0&0&0&0&0&0&0&-\frac{1}{2}&-\frac{\sqrt {3}}{2}&0&0
&\frac{\bar{w}}{2}&\frac {\sqrt {3}\bar{w}}{2}&0&0&0&0
\\ \noalign{\medskip}0&0&0&0&0&0&0&0&a&b&0&0&0&0&0&0&-a&-b\\ \noalign{\medskip}0&0&0&0&0&0&0&0&0&0&\frac{1}{\sqrt {2}}&-\frac{1}{\sqrt {2}}&0&0&0&0&-\frac{1}{\sqrt {2}}&\frac{1}{\sqrt {2}}\\ \noalign{\medskip}0&0&0&0
&0&0&0&0&0&0&\frac{1}{\sqrt {2}}&\frac{1}{\sqrt {2}}&0&0&-\frac{1}{\sqrt {2}}&-\frac{1}{\sqrt {2}}&0&0\\ \noalign{\medskip}a&-b&0&0&0&0&0&0&0&0&0&0&0&0&-a&b&0&0\end {array} \right] 
}\]
where $a=\frac{\sqrt{3}+1}{2\sqrt{2}}$ and $b=\frac{\sqrt{3}-1}{2\sqrt{2}}$.
The crystal polynomial takes the form,
\begin{align*}
p_{\mathcal{C}}(z,w) 
&=
z^4w-\frac{1}{\sqrt{3}}z^3w^2+\left(\frac{\sqrt{3}}{2}-2\right)z^3w-\frac{1}{2\sqrt{3}}z^3 +\left(\frac{1}{2}+\frac{1}{2\sqrt{3}}\right)z^2w^2 \\ 
&+\frac{1}{2\sqrt{3}}z^2w-\frac{1}{\sqrt{3}}z^2 -\frac{1}{2}zw^2+\left(\frac{3}{2}-\frac{1}{\sqrt{3}}\right)zw+\frac{1}{\sqrt{3}}z
-\frac{1}{2}w
\end{align*}
which leads to a finite RUM spectrum.
\end{Example}
\vspace{5mm}

\begin{table}[h]
   \centering
  \begin{tabular}{ | c | c | c | }
  \hline
  Motif & Translation group & Framework \\ 
  \hline
  \begin{minipage}{.3\textwidth}
  \begin{tikzpicture}[scale=1]
 
  \clip (-2.6,-2.2) rectangle (2cm,2.2cm); 
  
  \def\a{0.2588};
  \def\b{0.9659};
  \def\c{0.866};
  \def\d{0.7071};

  \coordinate (A1) at (-0.5,-\b);
  \coordinate (A2) at (0.5,-\b);
  \coordinate (A3) at (0.5+\a,0);
  \coordinate (A4) at (0.5,\b);
  \coordinate (A5) at (-0.5,\b);
  \coordinate (A6) at (-0.5-\a,0);
  \coordinate (A7) at (0,-\b-\c);
  \coordinate (A8) at (-0.5-\a-\d,-\d);
  \coordinate (A9) at (-0.5-\a-\d,\d);
  \coordinate (A10) at (0.5+\a+\d,-\d);
  \coordinate (A11) at (0.5+\a+\d,\d);
  \coordinate (A12) at (0,\b+\c);

  \draw (A1) -- (A2) -- (A3) -- (A4) --(A5)--(A6)-- cycle;
  \draw (A1) -- (A7) -- (A2) -- (A10) -- (A3) -- (A11) -- (A4) -- (A12) -- (A5) -- (A9) -- (A6)
	-- (A8)  -- cycle;
  \draw[very thin] (-0.15-\a,0) arc (0:75:.35);
 
  \node[right] at (-0.6,0.4)  {\small $\frac{5\pi}{12}$};	 
  \draw[dashed] (A6) --  (-0.15-\a,0);

  \node[draw,circle,inner sep=2.2pt,fill] at (A1) {};
  \node[draw,circle,inner sep=2.2pt,fill] at (A2) {};
  \node[draw,circle,inner sep=2.2pt,fill] at (A3) {};
  \node[draw,circle,inner sep=2.2pt,fill] at (A4) {};
  \node[draw,circle,inner sep=2.2pt,fill] at (A5) {};
  \node[draw,circle,inner sep=2.2pt,fill] at (A6) {};
  \node[draw,circle,inner sep=2.2pt,fill] at (A7) {};
  \node[draw,circle,inner sep=2.2pt,fill] at (A8) {};
  \node[draw,circle,inner sep=2.2pt,fill] at (A9) {};
  \node[draw,circle,inner sep=2.2pt,fill=white] at (A10) {}; 
  \node[draw,circle,inner sep=2.2pt,fill=white] at (A11) {};
  \node[draw,circle,inner sep=2.2pt,fill=white] at (A12) {};

  \end{tikzpicture}
  \end{minipage}
  &
  \begin{minipage}{.3\textwidth}
  \begin{tikzpicture}[scale=1] 
  \clip (-2.6,-2.2) rectangle (2.2cm,2.2cm); 
  
  \def\a{0.2588};
  \def\b{0.9659};
  \def\c{0.866};
  \def\d{0.7071};

  \draw[line width=1.4,->,dashed] (-0.5-\a-\d, -\b-\c) -- (0.5+\a+\d, -\b-\c);	
  \draw[line width=1.4,->,dashed] (-0.5-\a-\d, -\b-\c) -- (-0.5-\a-\d, \b+\c);
  \end{tikzpicture}
  \end{minipage}
  & 
  \begin{minipage}{.3\textwidth}
  \begin{tikzpicture}[scale=0.3] 
  \clip (-7.8,-4.8) rectangle (9cm,8.5cm);    
  
  \def\a{0.2588};
  \def\b{0.9659};
  \def\c{0.866};
  \def\d{0.7071};

  \foreach \x in {-8.796, -5.8637, -2.9319, 0, 2.9319, 5.8637, 8.796}
  {
   \foreach \y in {-10.9917, -7.3278, -3.6639, 0, 3.6639, 7.3278, 10.9917}
   {
   \coordinate (A1) at (-0.5+\x,-\b+\y);
   \coordinate (A2) at (0.5+\x,-\b+\y);
   \coordinate (A3) at (0.5+\a+\x,0+\y);
   \coordinate (A4) at (0.5+\x,\b+\y);
   \coordinate (A5) at (-0.5+\x,\b+\y);
   \coordinate (A6) at (-0.5-\a+\x,0+\y);
   \coordinate (A7) at (0+\x,-\b-\c+\y);
   \coordinate (A8) at (-0.5-\a-\d+\x,-\d+\y);
   \coordinate (A9) at (-0.5-\a-\d+\x,\d+\y);
   \coordinate (A10) at (0.5+\a+\d+\x,-\d+\y);
   \coordinate (A11) at (0.5+\a+\d+\x,\d+\y);
   \coordinate (A12) at (0+\x,\b+\c+\y);
 
   \draw (A1) -- (A2) -- (A3) -- (A4) --(A5)--(A6)-- cycle;
   \draw (A1) -- (A7) -- (A2) -- (A10) -- (A3) -- (A11) -- (A4) -- (A12) -- (A5) -- (A9) -- (A6)
	-- (A8)  -- cycle;
	
   \node[draw,circle,inner sep=1pt,fill] at (A1) {};
   \node[draw,circle,inner sep=1pt,fill] at (A2) {};
   \node[draw,circle,inner sep=1pt,fill] at (A3) {};
   \node[draw,circle,inner sep=1pt,fill] at (A4) {};
   \node[draw,circle,inner sep=1pt,fill] at (A5) {};
   \node[draw,circle,inner sep=1pt,fill] at (A6) {};
   \node[draw,circle,inner sep=1pt,fill] at (A7) {};
   \node[draw,circle,inner sep=1pt,fill] at (A8) {};
   \node[draw,circle,inner sep=1pt,fill] at (A9) {};
   \node[draw,circle,inner sep=1pt,fill] at (A10) {};
   \node[draw,circle,inner sep=1pt,fill] at (A11) {};
   \node[draw,circle,inner sep=1pt,fill] at (A12) {};
   }
  }

  \end{tikzpicture}
  \end{minipage} 
  \\ \hline
  \end{tabular}
  \caption{A crystal framework with  finite RUM spectrum}
\label{CFNonRegHex}
\end{table}

\begin{Example}
\label{OctagonEx}
The crystal framework illustrated in Table \ref{Octagon}  is based on a motif consisting of a regular octagon of equilteral triangles. The crystal polynomial is, 
\[p_\C(z,w) =p_1(z,w)p_2(z,w)\]
where
\[p_1(z,w)=(\sqrt{3}-\sqrt{2})  z^2w-zw^2 +2( \sqrt{2}-\sqrt{3} +1) zw+(\sqrt{3}-\sqrt{2}) w-z\]
\[p_2(z,w)= (\sqrt{3} +\sqrt{2}) z^2w-zw^2 -2 (\sqrt{2}+ \sqrt{3} -1) zw+(\sqrt{3} +\sqrt{2}) w-z\]
The RUM spectrum consists of points $(z,w)\in\bT^2$ which satisfy $\Re(w)=a\Re(z)+(1-a)$ for either $a=\sqrt{3}-\sqrt{2}$ or $a=\sqrt{3}+\sqrt{2}$.
This set is illustrated in Table \ref{OctagonRUM} as a subset of the torus $[0,1)^2$ which consists of four closed curves with the common intersection point $(0,0)$.
\end{Example}

\begin{table}[h]
   \centering
  \begin{tabular}{ | c | c | c | }
  \hline
  Motif & Translation group & Framework \\ 
  \hline
  \begin{minipage}{.3\textwidth}
    \begin{tikzpicture}[scale=1]
 
  \clip (-2.6,-2.2) rectangle (2.2cm,2.2cm); 
  
  \def\a{0.382};
  \def\b{0.9239};
  \def\c{0.6088};
  \def\d{0.7934};
  \def\e{1.3066};

  \coordinate (A1) at (-\b,-\b);
  \coordinate (A2) at (0,-\e);
  \coordinate (A3) at (\b,-\b);
  \coordinate (A4) at (\e,0);
  \coordinate (A5) at (\b,\b);
  \coordinate (A6) at (0,\e);
  \coordinate (A7) at (-\b,\b);
  \coordinate (A8) at (-\e,0);
  \coordinate (A9) at (-\e-\c,-\d);
  \coordinate (A10) at (\d,-\e-\c);
  \coordinate (A11) at (\e+\c,\d);
  \coordinate (A12) at (-\d,\e+\c);
  \coordinate (A13) at (-\e-\c,\d);
  \coordinate (A14) at (-\d,-\e-\c);
  \coordinate (A15) at (\e+\c,-\d);
  \coordinate (A16) at (\d,\e+\c);

  \draw (A1) -- (A2) -- (A3) -- (A4) --(A5)--(A6)--(A7)--(A8)-- cycle;
  \draw (A1) -- (A14) -- (A2) -- (A10) -- (A3) -- (A15) -- (A4) -- (A11) -- (A5) -- (A16) -- (A6)
	-- (A12) -- (A7) -- (A13) -- (A8) -- (A9) -- cycle;

  \node[draw,circle,inner sep=2.2pt,fill] at (A1) {};
  \node[draw,circle,inner sep=2.2pt,fill] at (A2) {};
  \node[draw,circle,inner sep=2.2pt,fill] at (A3) {};
  \node[draw,circle,inner sep=2.2pt,fill] at (A4) {};
  \node[draw,circle,inner sep=2.2pt,fill] at (A5) {};
  \node[draw,circle,inner sep=2.2pt,fill] at (A6) {};
  \node[draw,circle,inner sep=2.2pt,fill] at (A7) {};
  \node[draw,circle,inner sep=2.2pt,fill] at (A8) {};
  \node[draw,circle,inner sep=2.2pt,fill] at (A9) {};
  \node[draw,circle,inner sep=2.2pt,fill] at (A10) {}; 
  \node[draw,circle,inner sep=2.2pt,fill] at (A11) {};
  \node[draw,circle,inner sep=2.2pt,fill] at (A12) {};
  \node[draw,circle,inner sep=2.2pt,fill=white] at (A13) {};
  \node[draw,circle,inner sep=2.2pt,fill=white] at (A14) {};
  \node[draw,circle,inner sep=2.2pt,fill=white] at (A15) {};
  \node[draw,circle,inner sep=2.2pt,fill=white] at (A16) {};  

  \end{tikzpicture}
  \end{minipage}
  &
  \begin{minipage}{.3\textwidth}
 \begin{tikzpicture}[scale=1] 
 
   \clip (-2.6,-2.2) rectangle (2.2cm,2.2cm); 
  
   \def\c{0.6088}; 
   \def\e{1.3066};
 
   \draw[line width =1.2,->,dashed] (-\e-\c, -\e-\c) --  (\e+\c, -\e-\c);	
   \draw[line width =1.2,->,dashed] (-\e-\c, -\e-\c) -- (-\e-\c, \e+\c);
  \end{tikzpicture}
  \end{minipage}
  & 
  \begin{minipage}{.3\textwidth}
  \begin{tikzpicture}[scale=0.3] 
 
    \clip (-4.8,-4.8) rectangle (12cm,8.5cm);    
  
  \foreach \x in {-11.49, -7.66, -3.83, 0, 3.83, 7.66, 11.49}
  {
	  
  \foreach \y in {-11.49, -7.66, -3.83, 0, 3.83, 7.66, 11.49}
  {
   
  \def\a{0.382};
  \def\b{0.9239};
  \def\c{0.6088};
  \def\d{0.7934};
  \def\e{1.3066};

  \coordinate (A1) at (-\b+\x,-\b+\y);
  \coordinate (A2) at (0+\x,-\e+\y);
  \coordinate (A3) at (\b+\x,-\b+\y);
  \coordinate (A4) at (\e+\x,0+\y);
  \coordinate (A5) at (\b+\x,\b+\y);
  \coordinate (A6) at (0+\x,\e+\y);
  \coordinate (A7) at (-\b+\x,\b+\y);
  \coordinate (A8) at (-\e+\x,0+\y);
  \coordinate (A9) at (-\e-\c+\x,-\d+\y);
  \coordinate (A10) at (\d+\x,-\e-\c+\y);
  \coordinate (A11) at (\e+\c+\x,\d+\y);
  \coordinate (A12) at (-\d+\x,\e+\c+\y);
  \coordinate (A13) at (-\e-\c+\x,\d+\y);
  \coordinate (A14) at (-\d+\x,-\e-\c+\y);
  \coordinate (A15) at (\e+\c+\x,-\d+\y);
  \coordinate (A16) at (\d+\x,\e+\c+\y);

  \draw (A1) -- (A2) -- (A3) -- (A4) --(A5)--(A6)--(A7)--(A8)-- cycle;
  \draw (A1) -- (A14) -- (A2) -- (A10) -- (A3) -- (A15) -- (A4) -- (A11) -- (A5) -- (A16) -- (A6)
	-- (A12) -- (A7) -- (A13) -- (A8) -- (A9) -- cycle;
 
  \node[draw,circle,inner sep=1pt,fill] at (A1) {};
  \node[draw,circle,inner sep=1pt,fill] at (A2) {};
  \node[draw,circle,inner sep=1pt,fill] at (A3) {};
  \node[draw,circle,inner sep=1pt,fill] at (A4) {};
  \node[draw,circle,inner sep=1pt,fill] at (A5) {};
  \node[draw,circle,inner sep=1pt,fill] at (A6) {};
  \node[draw,circle,inner sep=1pt,fill] at (A7) {};
  \node[draw,circle,inner sep=1pt,fill] at (A8) {};
  \node[draw,circle,inner sep=1pt,fill] at (A9) {};
  \node[draw,circle,inner sep=1pt,fill] at (A10) {};
  \node[draw,circle,inner sep=1pt,fill] at (A11) {};
  \node[draw,circle,inner sep=1pt,fill] at (A12) {};
    }
 }

  \end{tikzpicture}
  \end{minipage} 
  \\ \hline
  \end{tabular}
  \caption{A crystal framework with linearly indecomposable RUM spectrum.}
\label{Octagon}
\end{table}

 \begin{table}[h]
 \centering
 \includegraphics[scale=0.5]{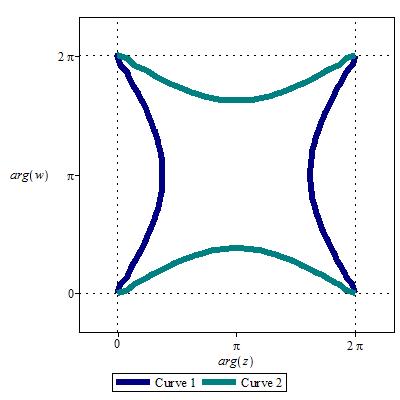} 
 \caption{The RUM spectrum for Example \ref{OctagonEx}.}
 \label{OctagonRUM}
 \end{table}

Let $\tilde{\C}$ be the basic one-dimensional grid framework for the lattice $\bZ$ in $\bR$. For any crystal framework $\C$ in $\bR^d$ one may construct a \emph{product framework}
$\tilde{\C}= \C \times \C_{\bZ}$  in $\bR^{d+1}$  whose intersection with 
the hyperplanes $\bR^d \times \{n\}$ are copies of $\C$ and where these copies are connected
by the edges  $((p(v),n),(p(v),n+1))$. In the case that  $\C$ has square matrix symbol function $\Phi_\C(z_1,\dots ,z_d)$ it is straightforward to verify that
\[
p_{\tilde{\C}}(z_1,\dots ,z_{d+1})=(z_{d+1}-1)^{|F_v|}p_\C(z_1,\dots ,z_d)
\]
This leads readily to the identification of the RUM spectrum in $\bT^3$ of such frameworks. Further three-dimensional examples not of this product form may be found in Power \cite{pow-poly} and Wegner \cite{weg}.

\begin{Example}
Let $\tilde{\C}$ be the three dimensional framework derived from the regular octagon framework of Example \ref{Octagon}. Then the crystal polynomial admits a three-fold factorisation and from this it follows that the RUM spectrum has the topological structure of four two-dimensional tori connected over the common circle of points $(1,1,z)$ in $\bT^3$.

\end{Example}



\end{document}